\documentclass[12pt,reqno]{amsart}
\usepackage{amsmath, amsthm, amssymb}

\topmargin 1cm
\advance \topmargin by -\headheight
\advance \topmargin by -\headsep
     
\setlength{\paperheight}{270mm}%
\setlength{\paperwidth}{192mm}%
\textheight 22.5cm
\oddsidemargin 1cm
\evensidemargin \oddsidemargin
\marginparwidth 1.25cm
\textwidth 14cm
\setlength{\parskip}{0.05cm}

\newtheorem{theorem}{Theorem}[section]
\newtheorem{lemma}[theorem]{Lemma}
\newtheorem{corollary}[theorem]{Corollary}

\theoremstyle{definition}

\theoremstyle{remark}

\numberwithin{equation}{section}

\newcommand{\mmod}[1]{\,\,(\text{mod}\,\,#1)}

   \def\fhat{{\widehat f}}

\def\bfh{{\mathbf h}}

\def\bfk{{\mathbf k}}

\def\bfx{{\mathbf x}}
\def\bfy{{\mathbf y}}

\def\dbN{{\mathbb N}}

\def\dbQ{{\mathbb Q}}

\def\grp{{\mathfrak p}}

\def\bet{{\beta}}  
\def\gam{{\gamma}} \def\gamtil{{\widetilde \gamma}} 
\def\del{{\delta}} 
\def\zet{{\zeta}}

\def\lam{{\lambda}}

\def\Ups{{\Upsilon}} 
\def\phihat{{\widehat \varphi}}

\def\ome{{\omega}} \def\Ome{{\Omega}} 

\def\eps{\varepsilon}

\def\le{\leqslant} \def\ge{\geqslant}

\begin{document}
\title[Waring's problem]{On Waring's problem: some consequences\\ of golubeva's method}
\author[T. D. Wooley]{Trevor D. Wooley}
\address{School of Mathematics, University of Bristol, University Walk, Clifton, Bristol BS8 1TW, United Kingdom}
\email{matdw@bristol.ac.uk}
\subjclass[2010]{11P05, 11D85}
\keywords{Waring's problem, ternary quadratic forms}
\date{}
\begin{abstract} We investigate sums of mixed powers involving two squares, two cubes, and various higher powers, concentrating on situations inaccessible to the Hardy-Littlewood method.\end{abstract}
\maketitle

\section{Introduction} Problems of Waring-type involving mixed sums of powers provide convenient specimens on which to test analytic methods that one hopes subsequently to apply more generally. Let $t\ge 2$ and $2\le k_1\le \ldots \le k_t$ be fixed integers, and let $n$ be a natural number sufficiently large in terms of $t$ and $\bfk$. We refer to the problem of establishing the existence of solutions $\bfx\in \dbN^t$ of the Diophantine equation
\begin{equation}\label{1.1}
n=x_1^{k_1}+x_2^{k_2}+\ldots +x_t^{k_t}
\end{equation}
as the {\it Waring problem} corresponding to the exponent $t$-tuple $\bfk$. Associated with this problem is the parameter $\bet(\bfk)=k_1^{-1}+\ldots +k_t^{-1}$ that determines the difficulty of obtaining solutions. Aficionados of the Hardy-Littlewood method will recognise that when $k_t\ge 3$, the intrinsic convexity barrier prevents the circle method from establishing the existence of solutions to (\ref{1.1}) when $\bet(\bfk)\le 2$. The goal of this paper is to resolve a wide class of such problems of Waring-type, if necessary by assuming the truth of the Generalised Riemann Hypothesis. Henceforth we abbreviate the latter hypothesis to GRH, by which we mean the Riemann Hypothesis for all $L$-functions associated with Dirichlet characters. Our first result illustrates what we have in mind, and concerns the exponent tuple $\bfk=(2,2,3,3,6,6)$, for which $\bet(\bfk)=2$.

\begin{theorem}\label{theorem1.1}
Assume the truth of GRH. Then all sufficiently large natural numbers $n$ are represented in the form
\begin{equation}\label{1.2}
x_1^2+x_2^2+x_3^3+x_4^3+x_5^6+x_6^6=n,
\end{equation}
with $x_i\in \dbN$ $(1\le i\le 6)$.
\end{theorem}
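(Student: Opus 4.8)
The plan is to peel off the two sixth powers as inexpensive ``steering'' variables and to attack the remaining four-variable equation by Golubeva's method. Given a large $n$, one first chooses $x_5,x_6\in\dbN$ with $x_5^6+x_6^6\ll n$ so that the residue of $m:=n-x_5^6-x_6^6$ to a suitable fixed modulus $q$ falls into whatever set of classes is convenient for the argument below. This is harmless: as one checks directly, the equation $x_1^2+x_2^2+x_3^3+x_4^3=m$ has solutions in $\dbZ_p$ for every prime $p$ and every $m$ --- the two free cubes, together with the behaviour of cubing modulo powers of $2$ and $3$, kill all potential local obstructions --- so no class of $m$ is excluded on local grounds, and the residues attained by $n-u^6-v^6$ modulo $q$ form a set ample enough to reach the desired one for all sufficiently large $n$.

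The substance of the proof is the auxiliary statement: on GRH, every sufficiently large $m$ (in the chosen residue classes) satisfies
\[
x_1^2+x_2^2+x_3^3+x_4^3=m .
\]
Here $\tfrac12+\tfrac12+\tfrac13+\tfrac13=\tfrac53<2$, so this lies beyond the reach of the Hardy--Littlewood method; instead I would count the solutions arithmetically. Writing $r(\ell)$ for the number of representations of $\ell$ as a sum of two squares, Jacobi's identity $r(\ell)=4\sum_{d\mid\ell}\chi_{-4}(d)$, with $\chi_{-4}$ the non-principal character modulo $4$, gives
\[
N(m)=\sum_{\substack{x_3,x_4\ge1\\ x_3^3+x_4^3<m}}r\bigl(m-x_3^3-x_4^3\bigr)=4\sum_{d\ge1}\chi_{-4}(d)\,T_d(m),
\]
where $T_d(m)$ counts the admissible pairs $(x_3,x_4)$ with $d\mid m-x_3^3-x_4^3$. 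Evaluating $T_d(m)$ through the number of solutions of $u^3+v^3\equiv m\pmod d$, the terms with $d\le m^{\theta}$ (for a small fixed $\theta>0$) contribute a main term $c\,\grS(m)\,m^{2/3}$ in which $\grS(m)$ is a convergent Euler product; the absence of local obstructions gives $\grS(m)\gg1$ uniformly, so the main term is $\gg m^{2/3}$.

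The crux is to show that everything else --- the tail $\sum_{d>m^{\theta}}\chi_{-4}(d)T_d(m)$, together with the error committed in approximating $T_d(m)$ for $d\le m^\theta$ --- is $o(m^{2/3})$. By the Chinese Remainder Theorem and the Weil bound for the cubic congruences, this is reduced to exhibiting essentially square-root cancellation in Dirichlet-character sums attached to $\chi_{-4}$ (and to the characters produced when the arithmetic of $u^3+v^3\equiv m$ is unwound) and in the allied sums $\sum_{x_3,x_4}\chi_{-4}\bigl(m-x_3^3-x_4^3\bigr)$ of the quadratic character over the values of the binary cubic form, the ranges involved being fixed powers of $m$. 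This is the point at which GRH is invoked: the Riemann Hypothesis for the $L$-functions of the relevant Dirichlet characters supplies the required bounds, whence $N(m)\gg\grS(m)\,m^{2/3}>0$ and $m$ is represented. I expect this cancellation estimate, uniform in $m$ and in the modulus, to be the main obstacle; by comparison, the choice of $q$ and of the residue classes, the verification that $\grS(m)\gg1$, and the routing of $n$ into the right class by means of $x_5,x_6$ are bookkeeping. Reversing the reduction then yields the representation \eqref{1.2} of every sufficiently large $n$.
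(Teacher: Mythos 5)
Your reduction rests on the auxiliary claim that, under GRH, every sufficiently large $m$ in suitable residue classes satisfies $x_1^2+x_2^2+x_3^3+x_4^3=m$. That claim is a famous open problem, strictly harder than Theorem \ref{theorem1.1} itself: here $\bet(2,2,3,3)=\tfrac53$, and even the case of two squares and \emph{three} cubes ($\bet=2$) required the deep work of Linnik and Hooley cited in the introduction. No proof of the two-squares-two-cubes case is known, conditionally on GRH or otherwise. The specific step that fails is the one you flag as the crux. After opening $r(\ell)$ with Jacobi's formula, the moduli $d$ range up to $\ell\asymp m$ (up to $m^{1/2}$ after the usual divisor switching), while there are only $O(m^{2/3})$ pairs $(x_3,x_4)$ available; what you need is equidistribution of the values $x_3^3+x_4^3$ in arithmetic progressions to moduli as large as $m^{1/2}$, on average over $d$, together with cancellation in $\sum_{x_3,x_4}\chi_{-4}(m-x_3^3-x_4^3)$. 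GRH for Dirichlet $L$-functions controls sums of $\chi(n)$ weighted by $\Lambda(n)$ or over all integers $n\le x$; it says nothing about character sums over the thin, polynomially parametrized sequence $\{m-u^3-v^3\}$, and the Weil bound for the complete congruence $u^3+v^3\equiv m\pmod d$ does not transfer to the incomplete ranges $u,v\le m^{1/3}$ when $d$ is a power of $m$. So the cancellation you invoke is not a consequence of GRH, and the whole argument collapses at this point.

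The paper's proof is structurally different and uses the sixth powers in an essential way, not as steering variables. The two cubes are specialized as $x_3=A+z$, $x_4=A-z$, turning the equation into the ternary quadratic problem $x_1^2+x_2^2+6Az^2=n-2A^3-x_5^6-x_6^6$ with the huge coefficient $A\approx n^{1/3}$. The sixth powers are then chosen, via $k$-th power residue congruences modulo small primes (whose existence is where GRH enters, through the effective Chebotarev theorem of Lagarias--Odlyzko applied to $\dbQ(\zet_k,\sqrt[k]{m})$, and through Heath-Brown's conditional bound for the least prime in a progression), so that $n-x_5^6-x_6^6$ is divisible by a large perfect power $q^{6h}$. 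Writing $A=q^{6h}B$ and dividing through lowers the level of the ternary form to $x^2+y^2+6Bz^2$ with $B$ small relative to the represented integer, at which point Golubeva's Theorem \ref{theorem2.1} on ternary forms (half-integral weight modular forms, Duke--Iwaniec bounds) applies. If you wish to salvage your plan you would need to replace your auxiliary claim with this level-lowering mechanism; as written, the proposal reduces the theorem to a harder unsolved problem.
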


As a consequence of recent work of the author \cite{Woo2012} concerning the anticipated asymptotic formula associated with this representation problem, it follows without condition that the number of integers $n$ with $1\le n\le X$, which fail to be represented in the shape (\ref{1.2}), is at most $O((\log X)^{3+\eps})$. The closest analogue to Theorem \ref{theorem1.1} thus far achieved in the literature is the implicit resolution by Vaughan \cite{Vau1980} of the Waring problem associated to the tuple $\bfk=(2,2,3,3,5,5)$, although it would not be difficult to extract from this result a conclusion for the tuple $\bfk=(2,2,3,3,5,6)$. There are very few instances in which the Waring problem corresponding to exponent $\bfk$ has been resolved when $\bet(\bfk)\le 2$. Subject to natural conditions of congruential type, Gauss \cite{Gau1801} tackled the case $\bfk=(2,2,2)$ with $\bet(\bfk)=\tfrac{3}{2}$, and more recently Linnik \cite{Lin1972} and Hooley \cite{Hoo1981b} successfully considered $\bfk=(2,2,3,3,3)$ with $\bet(\bfk)=2$. Also, work of Golubeva \cite{Gol1996, Gol2008} addresses the mixed exponent $\bfk=(2,2,3,3,4,16,4k+1)$ with $\bet(\bfk)=2-\tfrac{1}{48}+\tfrac{1}{4k+1}$. Most recently of all, subject to the validity of the Elliott-Halberstam conjecture together with GRH, work of Friedlander and the author \cite{FW2012} resolves the Waring problem corresponding to the exponent tuple $\bfk=(2,2,4,4,4,k)$, with $\bet(\bfk)=2-\tfrac{1}{4}+\tfrac{1}{k}$. Here, the final $k$th power may be deleted if one is prepared to accommodate certain congruence conditions modulo $48$.\par

Rather than apply the circle method to establish Theorem \ref{theorem1.1}, which as we have noted is limited to situations with $\bet(\bfk)>2$ by the convexity barrier, we instead apply a method of Golubeva involving the theory of ternary quadratic forms \cite{Gol1996, Gol2008}. In applying this method, we avoid in this paper certain difficulties arising from auxiliary congruences by assuming where necessary the truth of GRH. Following some preliminary discussion in \S2, we establish in \S3 the more widely applicable conclusion contained in the following theorem. Here and in \S3, we write
\begin{equation}\label{1.3}
\gam(\bfk)=\prod_{j=1}^t\left(1-\frac{1}{k_i}\right)\quad \text{and}\quad \gamtil(\bfk)=\left( 1-\frac{1}{k_t}\right)\prod_{j=1}^{t-2}\left(1-\frac{1}{k_i}\right).
\end{equation}
We emphasise that throughout this paper, except where otherwise indicated, we assume that $2\le k_1\le \ldots \le k_t$.

\begin{theorem}\label{theorem1.2}
Assume the truth of GRH. Then provided that $\gam(\bfk)<\frac{12}{17}$, all sufficiently large natural numbers $n$ are represented in the form
\begin{equation}\label{1.4}
x_1^2+x_2^2+x_3^3+x_4^3+\sum_{j=1}^ty_j^{k_j}=n,
\end{equation}
with $\bfx\in \dbN^4$ and $\bfy\in \dbN^t$. The same conclusion holds without the assumption of GRH if either (i) one has $t\ge 2$ and $\gamtil(\bfk)<\frac{74}{105}$, or (ii) one has $\gam(\bfk)<\frac{74}{105}$ and the exponents $k_1,\ldots ,k_t$ are not all even.
\end{theorem}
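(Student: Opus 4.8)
The plan is to realise the representation \eqref{1.4} by fixing the higher-power contributions first and then handling the remaining quaternary-style problem $x_1^2+x_2^2+x_3^3+x_4^3 = m$ via Golubeva's ternary-quadratic-form approach. Concretely, write $m = n - \sum_{j=1}^t y_j^{k_j}$ and aim to choose $\bfy \in \dbN^t$ so that $m$ is large, lies in a convenient congruence class, and (crucially) so that the number of admissible such $\bfy$ is large enough to absorb any exceptional set arising in the ternary problem. The density of the set $\{\sum_j y_j^{k_j} : y_j \le (n/t)^{1/k_j}\}$ up to $n$ is governed by $\gamma(\bfk)$, which is exactly why the hypothesis $\gamma(\bfk) < \tfrac{12}{17}$ (resp. $\tfrac{74}{105}$) enters: it guarantees that the count of representable values $m$ exceeds $n^{\gamma(\bfk)}$, while the exceptional set for the ternary sub-problem will be shown to be smaller than any such power of $n$.

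First I would record, presumably from \S2, the relevant input on $x_1^2+x_2^2+x_3^3+x_4^3 = m$: after fixing $x_4 = c$ in a suitable residue class so that $m - c^3 \equiv \text{(something nice)}$, one is left with $x_1^2+x_2^2 = m - c^3 - x_3^3$, i.e. a sum of two squares, whose solubility is controlled by the factorisation of $m-c^3-x_3^3$ and in particular by the absence of primes $p \equiv 3 \pmod 4$ to odd powers. Golubeva's method replaces the pair of squares by the theory of ternary forms: one seeks to represent $m - x_3^3 - x_4^3$ by a ternary quadratic form in a genus containing $x_1^2+x_2^2+x_5^2$ or similar, and the obstruction is the possible existence of other classes in the genus (spinor exceptions) together with the need to rule out local obstructions at small primes. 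The role of GRH — or, in the unconditional clauses, of either an oddness hypothesis on the $k_j$ or the stronger bound on $\gamtil(\bfk)$ — is to control the relevant character sums / least quadratic nonresidues that arise when one must force $m - x_3^3 - x_4^3$ into a residue class avoiding these local obstructions; the unconditional variant (i) trades GRH for the extra saving $(1-1/k_t)/(1-1/k_{t-1}) \cdot (1-1/k_{t-1})$ hidden in the definition of $\gamtil$, i.e. one spends two of the higher powers purely on congruence bookkeeping rather than on density.

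The key steps, in order, are: (1) reduce \eqref{1.4} to the assertion that, for a positive proportion of $\bfy$ in the relevant box, the number $n - \sum_j y_j^{k_j}$ is represented as $x_1^2+x_2^2+x_3^3+x_4^3$; (2) fix $x_3, x_4$ (and perhaps one $y_j$) in congruence classes chosen so that the residual number $N$ avoids all local obstructions to being a sum of two squares, invoking GRH (or the oddness/$\gamtil$ alternative) to guarantee such classes are hit sufficiently often; (3) apply the ternary-form input from \S2 to conclude that the set of $N \le n$ not so represented — equivalently the set of "bad" $\bfy$ — has size $o(n^{\gamma(\bfk)})$ (conditionally) or $o(n^{\gamtil(\bfk)})$; (4) compare with the lower bound $\gg n^{\gamma(\bfk)}$ (resp. $\gg n^{\gamtil(\bfk)}$) for the number of distinct values $\sum_j y_j^{k_j}$ in the box, given by the hypotheses $\gamma(\bfk)<\tfrac{12}{17}$ or $\gamtil(\bfk)<\tfrac{74}{105}$, to find at least one good $\bfy$. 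The constants $\tfrac{12}{17}$ and $\tfrac{74}{105}$ should emerge as the break-even exponents between these two counts, the numerators and denominators encoding the quality of the ternary estimate under GRH versus unconditionally.

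The main obstacle I expect is step (3): obtaining a genuinely power-saving bound on the exceptional set for $x_1^2+x_2^2+x_3^3+x_4^3 = N$, uniformly enough in the congruence constraints imposed in step (2). This requires more than the classical statement that almost all $N$ in an arithmetic progression are sums of two squares plus a cube plus a cube; one needs the exceptional set to be $O(n^{\theta})$ with $\theta$ strictly below $\tfrac{12}{17}$ (or $\tfrac{74}{105}$), which is precisely where the analytic strength of Golubeva's method — and the leverage provided by GRH on the distribution of the relevant quadratic and cubic residues — is indispensable. Managing the interaction between this uniformity and the freedom needed in choosing the $x_3, x_4$ congruence classes, so that one does not lose the power saving, will be the delicate part of \S3.
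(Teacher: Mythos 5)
Your proposal does not follow the paper's argument, and as it stands it contains a fatal gap. The counting comparison at the heart of your steps (3)--(4) rests on a false premise: the number of distinct values of $\sum_{j=1}^t y_j^{k_j}$ not exceeding $n$ is at most the number of tuples $\bfy$ in the box, namely $O(n^{\bet(\bfk)})$ with $\bet(\bfk)=k_1^{-1}+\cdots+k_t^{-1}$, and this is \emph{not} $\gg n^{\gam(\bfk)}$. The quantity $\gam(\bfk)=\prod_j(1-1/k_j)$ is not a density of values of the auxiliary powers; for instance, for Theorem \ref{theorem1.1} one has two sixth powers, so the set of available shifts has cardinality $O(n^{1/3})$ while $n^{\gam(\bfk)}=n^{25/36}$. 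Consequently your step (4) cannot produce a good $\bfy$ unless the exceptional set for $x_1^2+x_2^2+x_3^3+x_4^3=N$ (restricted to the residue classes you impose) is smaller than $n^{1/3}$. No such power-saving exceptional-set estimate is known, nor is one plausible with current technology: that quaternary problem has $\sum 1/k=5/3<2$ and sits itself behind the convexity barrier, which is precisely why the paper does not treat it as a black box. Your description of the role of GRH (character sums controlling local obstructions to sums of two squares, spinor exceptions) is also not what occurs.

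The actual mechanism is structural rather than statistical. One writes $x_3=A+z$, $x_4=A-z$, so that $x_3^3+x_4^3=2A^3+6Az^2$ and \eqref{1.4} becomes the representation of $n-2A^3-\sum_j y_j^{k_j}$ by the \emph{ternary quadratic form} $x_1^2+x_2^2+6Az^2$. Since $A\approx n^{1/3}$ the level of this form is far too large for the theory of ternary forms, so the $y_j$ are chosen not for density but so that $n-\sum_j y_j^{k_j}$ is divisible by a high prime power $\varpi^{6Kh}$, permitting the substitution $A=\varpi^{6Kh}B$, $x_i=\varpi^{3Kh}z_i$, which lowers the level (Lemmata \ref{lemma2.3}--\ref{lemma2.5}, iterated in \S3). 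Each such step replaces the residual integer $m$ by roughly $m^{1-1/k_j}$, which is where $\gam(\bfk)$ enters: the final integer has size about $n^{\gam(\bfk)}$. GRH is used only via effective Chebotarev (Lagarias--Odlyzko) to produce a small prime $\varpi$ modulo which $n$ is a $k_t$-th power residue when $k_t$ is even; when some $k_j$ is odd, or when two exponents are sacrificed as in Lemma \ref{lemma2.5} (whence $\gamtil$), this is unconditional. The thresholds $\frac{12}{17}$ and $\frac{74}{105}$ arise at the very end, from comparing $NM^{12}$ against $p^{21+\del}$ in Golubeva's Theorem \ref{theorem2.1}, where the prime $p$ is supplied by least-prime-in-arithmetic-progression results to prime-power moduli: exponent $2+\eps$ under GRH (Heath-Brown) versus $\frac{12}{5}+\eps$ unconditionally (Iwaniec, Huxley, Gallagher), giving $\gam(\bfk)<\frac{7c-2}{10c-3}$ with $c=2$ or $c=\frac{12}{5}$ respectively. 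To repair your proposal you would need to abandon the exceptional-set framework entirely and adopt this level-lowering construction.
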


The conclusion of Theorem \ref{theorem1.1} follows at once on noting that $\gam(6,6)=\frac{25}{36}<\frac{12}{17}$. Further corollaries follow likewise with a modicum of computation.

\begin{corollary}\label{corollary1.4}
All sufficiently large natural numbers $n$ are represented as a sum of positive integral powers in the form
$$x_1^2+x_2^2+x_3^3+x_4^3+x_5^5+x_6^8=n,$$
and also in the form
$$x_1^2+x_2^2+x_3^3+x_4^3+x_5^9+x_6^9+x_7^9=n.$$
\end{corollary}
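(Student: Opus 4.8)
The plan is to deduce Corollary~\ref{corollary1.4} from Theorem~\ref{theorem1.2} by checking, for each of the two exponent tuples appearing in the displayed equations, that one of the three sufficient conditions in the theorem holds. In each case the quadratic-cubic block $x_1^2+x_2^2+x_3^3+x_4^3$ is already present, so the relevant parameters are $\gam(\bfk)$ and $\gamtil(\bfk)$ computed for the remaining high powers $\bfk$, where in the first instance $\bfk=(5,8)$ and in the second $\bfk=(9,9,9)$.

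First I would treat the tuple $\bfk=(5,8)$. Here $t=2$, so $\gamtil(\bfk)=(1-\tfrac18)\prod_{j=1}^{0}(\cdot)=1-\tfrac18=\tfrac78$, which exceeds $\tfrac{74}{105}$, so condition (i) of Theorem~\ref{theorem1.2} fails and likewise the empty-product convention must be handled carefully; instead I would use condition (ii), noting that the exponents $5$ and $8$ are not all even, and that $\gam(5,8)=(1-\tfrac15)(1-\tfrac18)=\tfrac45\cdot\tfrac78=\tfrac{7}{10}<\tfrac{74}{105}$ since $\tfrac{7}{10}=\tfrac{73.5}{105}$. Hence Theorem~\ref{theorem1.2}(ii) applies unconditionally and yields the first representation.

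Next I would treat $\bfk=(9,9,9)$. Now $\gam(9,9,9)=(1-\tfrac19)^3=(\tfrac89)^3=\tfrac{512}{729}$. One checks $\tfrac{512}{729}<\tfrac{12}{17}$ since $512\cdot 17=8704<8748=12\cdot 729$, so the GRH-conditional clause of Theorem~\ref{theorem1.2} already gives the representation; but to obtain an unconditional statement I would instead compute $\gamtil(9,9,9)=(1-\tfrac19)\prod_{j=1}^{1}(1-\tfrac19)=(\tfrac89)^2=\tfrac{64}{81}$, which is larger than $\tfrac{74}{105}$, so condition (i) fails. However condition (ii) applies: the exponents are all equal to $9$, which is odd, so they are not all even, and we have just seen that $\gam(9,9,9)=\tfrac{512}{729}$; comparing with $\tfrac{74}{105}$ via $512\cdot 105=53760<53946=74\cdot 729$ confirms $\gam(9,9,9)<\tfrac{74}{105}$. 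Thus Theorem~\ref{theorem1.2}(ii) delivers the second representation unconditionally as well.

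The only real subtlety, and hence the main point to be careful about, is the bookkeeping of which branch of Theorem~\ref{theorem1.2} to invoke: the naive choice $\gamtil$ in clause (i) is too weak for these tuples because dropping only the two largest exponents from the product still leaves a large factor, whereas clause (ii) permits the full product $\gam(\bfk)$ against the threshold $\tfrac{74}{105}$ at the cost of a parity condition, which is automatically satisfied here since $5$, $8$ together and $9,9,9$ together are never a list of even numbers. Once the correct clause is identified, the remaining work is the elementary fraction comparisons displayed above, so the corollary follows immediately.
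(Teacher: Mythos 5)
Your proposal is correct and follows the same route as the paper, which simply notes that $\gamma(5,8)=\tfrac{4}{5}\cdot\tfrac{7}{8}=\tfrac{7}{10}<\tfrac{74}{105}$ and $\gamma(9,9,9)=(\tfrac{8}{9})^3=\tfrac{512}{729}<\tfrac{74}{105}$ and appeals to the unconditional part of Theorem~\ref{theorem1.2}. Your additional care in identifying clause (ii) (rather than clause (i), which indeed fails for both tuples) as the applicable branch is exactly the right reading of the theorem, and your fraction comparisons are accurate.
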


\begin{corollary}\label{corollary1.3}
Assume the truth of GRH. Then all sufficiently large natural numbers $n$ are represented as a sum of positive integral powers in the form
$$x_1^2+x_2^2+x_3^3+x_4^3+x_5^6+x_6^{12}+x_7^{12}=n.$$
\end{corollary}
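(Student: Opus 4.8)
The plan is to obtain Corollary~\ref{corollary1.3} as an immediate consequence of Theorem~\ref{theorem1.2}, applied with a suitably chosen auxiliary tuple of exponents. Matching the representation in the corollary against the shape (\ref{1.4}), one sees that the summands $x_1^2+x_2^2+x_3^3+x_4^3$ are already present in the conclusion of Theorem~\ref{theorem1.2}, so I would simply take $t=3$ and $\bfk=(6,12,12)$ to account for the remaining terms $x_5^6+x_6^{12}+x_7^{12}$. This tuple respects the standing hypothesis $2\le k_1\le k_2\le k_3$, so the only thing that needs checking is the numerical condition $\gam(\bfk)<\tfrac{12}{17}$.

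For that verification, I would compute from (\ref{1.3}) that
\[
\gam(6,12,12)=\left(1-\tfrac16\right)\left(1-\tfrac1{12}\right)^2=\frac{5}{6}\cdot\frac{11}{12}\cdot\frac{11}{12}=\frac{605}{864},
\]
and then confirm $\tfrac{605}{864}<\tfrac{12}{17}$ by cross-multiplication, noting that $605\cdot17=10285<10368=864\cdot12$. With this inequality in hand, Theorem~\ref{theorem1.2} in its GRH-conditional form yields a representation of every sufficiently large $n$ in the form $x_1^2+x_2^2+x_3^3+x_4^3+y_1^6+y_2^{12}+y_3^{12}=n$ with $\bfx\in\dbN^4$ and $\bfy\in\dbN^3$, which is precisely the statement of the corollary.

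The one remaining point to address — and really the only reason the corollary is stated conditionally rather than unconditionally as in Corollary~\ref{corollary1.4} — is why GRH cannot be dispensed with here. Since the exponents $6,12,12$ are all even, clause~(ii) of Theorem~\ref{theorem1.2} is unavailable; and clause~(i) would demand $\gamtil(\bfk)<\tfrac{74}{105}$, whereas $\gamtil(6,12,12)=\bigl(1-\tfrac1{12}\bigr)\bigl(1-\tfrac16\bigr)=\tfrac{55}{72}$, and $55\cdot105=5775>5328=72\cdot74$, so this too fails. Thus both GRH-free alternatives are ruled out, and the conditional statement is the best that Theorem~\ref{theorem1.2} provides. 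Beyond this bookkeeping there is essentially no obstacle to surmount: all of the analytic substance is absorbed into Theorem~\ref{theorem1.2}, and the corollary reduces to confirming a single rational inequality.
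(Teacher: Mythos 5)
Your proposal is correct and matches the paper's own (very brief) justification: Corollary~\ref{corollary1.3} is deduced from Theorem~\ref{theorem1.2} with $\bfk=(6,12,12)$ by verifying $\gam(6,12,12)=\left(\frac{5}{6}\right)\left(\frac{11}{12}\right)^2=\frac{605}{864}<\frac{12}{17}$, exactly as you do. Your additional check that the unconditional clauses (i) and (ii) are unavailable for this all-even tuple is accurate and explains why the statement is conditional on GRH, though the paper does not spell this out.
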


If one is prepared to assume the Ramanujan Conjecture concerning the Fourier coefficients of cusp forms of weight $\tfrac{3}{2}$, then further progress is possible, as we demonstrate in \S4.

\begin{theorem}\label{theorem1.5}
Assume the truth of GRH and the Ramanujan Conjecture. Then provided that $\gam(\bfk)<\frac{5}{6}$, all sufficiently large natural numbers $n$ are represented in the form (\ref{1.4}) with $\bfx\in \dbN^4$ and $\bfy\in \dbN^t$. The same conclusion holds without the assumption of GRH if either (i) one has $t\ge 2$ and $\gamtil(\bfk)<\frac{5}{6}$, or (ii) one has $\gam(\bfk)<\frac{5}{6}$ and the exponents $k_1,\ldots ,k_t$ are not all even.
\end{theorem}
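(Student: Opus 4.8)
The plan is to retrace the proof of Theorem~\ref{theorem1.2} from \S3 almost verbatim, altering only the analytic input that bounds the error term, namely the estimate for the Fourier coefficients of cusp forms of weight $\tfrac32$. Recall the shape of that argument. One parametrises the pair of cubes by their half-sum $\sigma$ and half-difference $\rho$, so that $x_3^3+x_4^3=2\sigma^3+6\sigma\rho^2$, whence for each admissible $\sigma$ the expression $x_1^2+x_2^2+6\sigma\rho^2$ is a positive definite ternary quadratic form $Q_\sigma$ of discriminant $\asymp\sigma$ in the variables $x_1,x_2,\rho$. Fixing $\bfy$ in suitable boxes and writing $m_\sigma=n-\sum_{j=1}^ty_j^{k_j}-2\sigma^3$, one has
\[
R(n)\gg\sum_{\bfy}\,\sum_{\sigma}\,r_{Q_\sigma}(m_\sigma),
\]
the inner sum running over $\sigma$ in a range with $m_\sigma\asymp n$. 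Splitting $r_{Q_\sigma}(m_\sigma)=a_E^{(\sigma)}(m_\sigma)+a_C^{(\sigma)}(m_\sigma)$ into Eisenstein and cuspidal parts, one has $a_E^{(\sigma)}(m_\sigma)\gg m_\sigma^{1/2}\sigma^{-1/2}\grS$ with the singular series $\grS\gg n^{-\eps}$; after arranging that a positive proportion of the pairs $(\bfy,\sigma)$ render $m_\sigma$ locally soluble for $Q_\sigma$ and not spinor-exceptional, summation over $\bfy$ and $\sigma$ yields a main term of size a power of $n$ governed by $\gam(\bfk)$. The cuspidal part $a_C^{(\sigma)}(m_\sigma)$ is the $m_\sigma$-th Fourier coefficient of a cusp form of weight $\tfrac32$ and level $\asymp\sigma$; estimating it, uniformly in the level, by $O_\eps\big((\sigma m_\sigma)^\eps m_\sigma^{\vartheta}\big)$ for the sharpest admissible unconditional exponent $\vartheta$, and summing, one finds the main term dominates exactly when $\gam(\bfk)<\tfrac{12}{17}$, with the thresholds $\tfrac{74}{105}$ in cases (i) and (ii). The GRH input in the main case, and the unconditional devices replacing it in the GRH-free cases --- the $\gamtil$-reduction in (i), and the non-trivial character-sum estimates available when $k_1,\ldots,k_t$ are not all even in (ii) --- are exactly as in \S3.

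For Theorem~\ref{theorem1.5} I would run this argument with no structural change, the single difference being that under the Ramanujan Conjecture one may take the conjectural weight-$\tfrac32$ exponent $\vartheta=\tfrac14$ in the bound for $a_C^{(\sigma)}(m_\sigma)$, again uniformly in the level $\asymp\sigma$. Re-optimising the range of $\sigma$ and the sizes of the boxes for $\bfy$ against this much stronger bound, the same elementary computation that produced $\tfrac{12}{17}$ now returns $\tfrac56$ in the main case, and equally $\tfrac56$ in place of $\tfrac{74}{105}$ in the two GRH-free cases; every other ingredient --- local solubility of $Q_\sigma$ at $2$ and $3$, the removal of the spinor-exceptional integers, the lower bound for $\grS$, and the handling of the auxiliary congruences --- is inherited unchanged from \S3.

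The only point I expect to demand genuine care rather than transcription is checking that the substitution $\vartheta=\tfrac14$ is legitimate at every place in \S3 where the value of the error exponent entered. Two such places call for attention. First, the Ramanujan bound must be applied uniformly over the growing levels $\asymp\sigma$, so one needs it in the shape $|a_f(m)|\ll_\eps(mN_f)^\eps m^{1/4}$ rather than with an $f$-dependent constant. Second, for targets $m_\sigma$ divisible by a large square the unconditional Duke-type estimate and the Ramanujan bound have different dependence on the square part, so one must verify that no step of \S3 implicitly relied on the numerical form of the former; here the saving feature is that $|a_f(m)|\ll_\eps(mN_f)^\eps m^{1/4}$ holds uniformly in $m$ as well, so this case costs nothing. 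Once this is in place, $\gam(\bfk)<\tfrac56$ is precisely the range in which the Eisenstein main term overcomes the cuspidal error, and Theorem~\ref{theorem1.5} follows; applying the same verification to the variants of the argument underlying cases (i) and (ii) yields those parts of the statement.
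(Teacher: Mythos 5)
Your high-level instinct is correct --- the paper does prove Theorem \ref{theorem1.5} by injecting the Ramanujan Conjecture into the ternary-forms input and leaving the rest of \S3 essentially untouched --- but as written your argument has a genuine gap, for two intertwined reasons. First, the version of \S3 you propose to retrace is not the argument in the paper: you describe a direct sum of $r_{Q_\sig}(m_\sig)$ over all half-sums $\sig\asymp n^{1/3}$, with the form $x_1^2+x_2^2+6\sig \rho^2$ of level $\asymp\sig$ representing a target $m_\sig\asymp n\asymp \sig^3$. This omits Golubeva's level-lowering, which is the heart of the method. Second, the cuspidal bound you invoke, $O_\eps((\sig m_\sig)^\eps m_\sig^{1/4})$ \emph{uniformly in the level}, is not a consequence of the Ramanujan Conjecture. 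To bound the cuspidal part of the theta series one expands over an orthonormal basis of $S_{3/2}^{(1)}(24p,\chi)$, and the Petersson normalisation together with the dimension $K\ll p^{1+\eps}$ forces a factor of roughly $p^{3/4+\eps}$: the bound actually available (Theorem \ref{theorem4.1}) is $O(p^{3/4+\eps}m^{1/2+\eps}t^{1/4+\eps})=O(p^{3/4+\eps}n^{1/4+\eps})$ for $n=tm^2$. With level $p\asymp n^{1/3}$ this error is $\asymp n^{1/2+\eps}$ and swamps the Eisenstein term $n^{1/2}p^{-1/2}\asymp n^{1/3}$, so the direct sum over $\sig$ fails even under Ramanujan. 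A sanity check: if your level-uniform $m^{1/4}$ bound were true, your sketch would establish the solubility of $n=x_1^2+x_2^2+x_3^3+x_4^3$ with no auxiliary powers and no condition on $\gam(\bfk)$ at all, which is far beyond the theorem (and is a notorious open problem).

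The paper instead first proves Theorem \ref{theorem4.1}, a Ramanujan-conditional sharpening of Golubeva's ternary theorem in which the hypothesis $nm^{12}>C(\del)p^{21+\del}$ of Theorem \ref{theorem2.1} is relaxed to $n>C(\del)p^{5+\del}$ (the exponent $5$ coming exactly from balancing $n^{1/2}p^{-1/2}$ against $p^{3/4}n^{1/4}$), and then feeds this into the unchanged iteration of \S3: there one does not average over $\sig$ but constructs a single $A=\lam p$ with $\lam=\Ups_0\asymp n^{1-\gam_0}$, reducing to $N=x^2+y^2+6pz^2$ with $N\asymp n^{\gam_0}$ and $p\asymp n^{\gam_0-2/3}$, so that $N>C p^{5+\del}$ becomes $\gam_0>5(\gam_0-\tfrac23)$, i.e.\ $\gam_0<\tfrac56$. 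Note also that your account of where the constants come from is off: the $\tfrac{12}{17}$ versus $\tfrac{74}{105}$ dichotomy in Theorem \ref{theorem1.2} arises not from the cusp-form exponent but from the least-prime-in-an-arithmetic-progression exponent $c$ (Heath-Brown under GRH versus Iwaniec--Huxley--Gallagher unconditionally) governing the square factor $M=5^h$; under Ramanujan the $M^{12}$ term disappears from the hypothesis, $c$ drops out of the numerology, and that is precisely why all three cases of Theorem \ref{theorem1.5} share the single threshold $\tfrac56$, with GRH surviving only in Lemma \ref{lemma2.2} for even exponents. Your proposal does not engage with either of these points.
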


We record two immediate consequences of Theorem \ref{theorem1.5} which require no further explanation.

\begin{corollary}\label{corollary1.6}
Assume the truth of the Ramanujan Conjecture. Then all sufficiently large natural numbers $n$ are represented as a sum of positive integral powers in the form
$$x_1^2+x_2^2+x_3^3+x_4^3+x_5^5=n,$$
and also in the form
$$x_1^2+x_2^2+x_3^3+x_4^3+x_5^6+x_6^{2k+1}=n.$$
\end{corollary}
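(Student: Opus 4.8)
The plan is to obtain both representations as direct specialisations of Theorem~\ref{theorem1.5}. Although that theorem is stated under the Ramanujan Conjecture together with GRH, its conclusion holds under the Ramanujan Conjecture alone in case~(ii), namely whenever $\gam(\bfk)<\tfrac{5}{6}$ and the exponents $k_1,\ldots,k_t$ are not all even. It therefore suffices, for each of the two shapes in the statement, to produce an admissible tuple $\bfk$ (with $2\le k_1\le\cdots\le k_t$) meeting these two requirements; the representation then furnished by~\eqref{1.4} is precisely the one sought.

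For the first shape I would take $t=1$ and $\bfk=(5)$, so that $\gam(\bfk)=1-\tfrac15=\tfrac45<\tfrac56$ while the lone exponent $5$ is odd. Case~(ii) of Theorem~\ref{theorem1.5} then applies and delivers $x_1^2+x_2^2+x_3^3+x_4^3+y_1^5=n$ for all sufficiently large $n$, which is the desired form.

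For the second shape I would take $t=2$ and let $\bfk$ consist of the entries $6$ and $2k+1$ arranged in nondecreasing order. Since $\gam$ depends only on the multiset of exponents, one computes
\[
\gam(\bfk)=\Bigl(1-\tfrac16\Bigr)\Bigl(1-\tfrac1{2k+1}\Bigr)=\frac{5k}{6k+3},
\]
and the inequality $\tfrac{5k}{6k+3}<\tfrac56$ rearranges to the true statement $0<15$, so it holds for every $k\ge1$. As $2k+1$ is odd, the exponents are again not all even, whence case~(ii) of Theorem~\ref{theorem1.5} yields $x_1^2+x_2^2+x_3^3+x_4^3+x_5^6+x_6^{2k+1}=n$ for all large $n$.

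There is no substantive obstacle to overcome: all of the analytic difficulty has already been absorbed into Theorem~\ref{theorem1.5}, and what remains is merely to verify the two elementary inequalities for $\gam(\bfk)$ and to note the parity of the final exponent in each case, so that case~(ii) — which dispenses with GRH — may be invoked.
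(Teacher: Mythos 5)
Your proposal is correct and is exactly the verification the paper has in mind: the corollary is stated as an immediate consequence of Theorem \ref{theorem1.5}, and your computations $\gam(5)=\tfrac45<\tfrac56$ and $\gam(6,2k+1)=\tfrac{5k}{6k+3}<\tfrac56$, together with the observation that an odd exponent is present in each case so that case (ii) dispenses with GRH, are precisely what is needed.
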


\begin{corollary}\label{corollary1.7}
Assume the truth of GRH and the Ramanujan Conjecture. Then all sufficiently large natural numbers $n$ are represented as a sum of positive integral powers in the form
$$x_1^2+x_2^2+x_3^3+x_4^3+x_5^4=n.$$
\end{corollary}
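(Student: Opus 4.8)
The plan is to obtain the corollary as an immediate specialisation of Theorem \ref{theorem1.5}. First I would apply that theorem with $t=1$ and the auxiliary exponent tuple $\bfk=(4)$, so that the sum $\sum_{j=1}^{t}y_j^{k_j}$ occurring in (\ref{1.4}) reduces to the single fourth power $y_1^4$, and (\ref{1.4}) takes the shape $x_1^2+x_2^2+x_3^3+x_4^3+y_1^4=n$. The one hypothesis of Theorem \ref{theorem1.5} that must be checked is that $\gam(\bfk)<\tfrac56$; but from (\ref{1.3}) we have $\gam((4))=1-\tfrac14=\tfrac34$, and $\tfrac34<\tfrac56$. Hence, on the assumption of GRH together with the Ramanujan Conjecture, Theorem \ref{theorem1.5} guarantees a solution for every sufficiently large $n$, and relabelling $y_1$ as $x_5$ yields precisely the representation claimed.

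It is worth recording why the conclusion here requires both GRH and the Ramanujan Conjecture, rather than merely one of the two. The unconditional alternatives (i) and (ii) offered by Theorem \ref{theorem1.5} are both unavailable for the present tuple: alternative (i) requires $t\ge 2$, whereas we are constrained to take $t=1$ if the extra variable is to be a single biquadrate $x_5^4$; and alternative (ii) requires that the exponents $k_1,\ldots ,k_t$ not all be even, whereas our only exponent, $k_1=4$, is even. Consequently one must invoke the main (doubly conditional) branch of Theorem \ref{theorem1.5}, which is exactly the hypothesis set of the corollary.

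I do not foresee any real difficulty here: the entire argument amounts to the numerical inequality $\gam((4))=\tfrac34<\tfrac56$ together with the observation that, with $t=1$ and $k_1=4$, the form in (\ref{1.4}) is literally $x_1^2+x_2^2+x_3^3+x_4^3+x_5^4$. The only point deserving a moment's care is confirming that the statement of Theorem \ref{theorem1.5} does permit $t=1$, which it does, since the block $x_1^2+x_2^2+x_3^3+x_4^3$ together with a single further power already supplies the variables the method needs; indeed the first assertion of Corollary \ref{corollary1.6} is proved by the same device with $\bfk=(5)$, the difference there being only that $5$ is odd, so that alternative (ii) applies and GRH can be dispensed with.
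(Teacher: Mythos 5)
Your proposal is correct and is exactly the argument the paper intends: the corollary is stated as an ``immediate consequence'' of Theorem \ref{theorem1.5}, obtained by taking $t=1$, $\bfk=(4)$, and checking $\gam((4))=\tfrac34<\tfrac56$, with both hypotheses required since $k_1=4$ is even and $t=1$ rules out the unconditional alternatives. Nothing further is needed.
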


We finish by remarking that several authors have considered Waring's problem in the general setting (\ref{1.1}) with mixed powers $k_1,\ldots ,k_t$. The reader will locate a representative slice of this literature in the sources Br\"udern \cite{Bru1987}, Hooley \cite{Hoo1981a} and Vaughan \cite{Vau1971}.\par

It is a pleasure to record here my gratitude to Elena Golubeva and Valentin Blomer for correspondence and conversations several years ago concerning Golubeva's theorem on ternary quadratic forms and its proof, here reported as Theorem \ref{theorem2.1} below.

\section{Golubeva's method: preliminaries}
Our goal in this section and the next is the proof of Theorem \ref{theorem1.2}, together with the conclusion of Theorem \ref{theorem1.1} which essentially amounts to a corollary of the former theorem. Here we make use of a method originating in work of Linnik \cite{Lin1943}, and much enhanced by the level-lowering procedure of Golubeva \cite{Gol1996, Gol2008}. We seek to establish the solubility of the equation (\ref{1.4}) when $n$ is large by putting $x_3=A+x_0$ and $x_4=A-x_0$, with $A\in \dbN$ of size nearly $n^{1/3}$ and $|x_0|<A$. Notice here that when $A$ and $x_0$ are both integers, then $x_3$ and $x_4$ are necessarily of the same parity. In this way, the representation problem (\ref{1.4}) is transformed into the related one
\begin{equation}\label{2.1}
x_1^2+x_2^2+6Ax_0^2=n-2A^3-\sum_{j=1}^ty_j^{k_j},
\end{equation}
in which we seek a solution with $A\in \dbN$, $|x_0|<A$ and $\bfy\in \dbN^t$. Were $A$ to be very small compared to the integer on the right hand side of (\ref{2.1}), for fixed $\bfy$, and $x_0$ unconstrained, then such a problem would be well within the scope of the theory of ternary quadratic forms. The large size of $A$ obstructs such an approach. In order to bypass this difficulty, we instead seek to choose $y_1,\ldots ,y_t$ in such a way that
$$n-\sum_{j=1}^ty_j^{k_j}$$
is divisible by $q^{6h}$, for some natural numbers $q$ and $h$. If $q^{2h}$ is reasonably close in size to $n^{1/3}$, then one may substitute
$$A=q^{6h}B,\quad x_0=z_0,\quad x_i=q^{3h}z_i\quad (i=1,2),$$
$$N=q^{-6h}\Bigl( n-\sum_{j=1}^ty_j^{k_j}\Bigr) -2q^{12h}B^3.$$
This transforms the representation problem (\ref{2.1}) into the shape
$$z_1^2+z_2^2+6Bz_0^2=N,$$
with $N$ now relatively large compared to $B$. In this way, the level of the modular forms involved in the analysis of the ternary quadratic form $z_1^2+z_2^2+6Bz_0^2$ is lowered to the point that a successful analysis again becomes feasible. This level lowering approach of Golubeva is further enhanced by choosing $B$ in such a manner that $N$ contains a sizeable square factor, this innovation also having been devised by Golubeva.\par

The key ingredient from the theory of ternary quadratic forms of which we make use is the following result of Golubeva.

\begin{theorem}\label{theorem2.1}
For each $\del>0$, there exists a positive number $C(\del)$ with the property that, whenever $n\in \dbN$ and $p$ is a prime number satisfying the following conditions:
\item{(i)} one has $(n,6p)=1$;
\item{(ii)} the congruence $n\equiv x^2+y^2+6pz^2\mmod{16}$ is soluble;
\item{(iii)} the integer $n$ may be written in the form $n=tm^2$ with $(t,m)=1$, and one has $nm^{12}>C(\del)p^{21+\del}$;
\vskip.1cm \noindent then the equation
\begin{equation}\label{2.2}
n=x^2+y^2+6pz^2
\end{equation}
has a solution in natural numbers $x$, $y$ and $z$.
\end{theorem}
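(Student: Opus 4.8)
The natural approach is via the theory of modular forms of half-integral weight: the plan is to realise $r_Q(n)$, the number of solutions of \eqref{2.2} in integers $x,y,z$, as a Fourier coefficient of a theta series, split off its Eisenstein part, and show that the Eisenstein contribution dominates the cuspidal one (a representation in natural numbers then follows with little additional effort). Concretely, let $Q$ be the ternary form on the right of \eqref{2.2} and put $\theta_Q(\tau)=\sum_{\bfv\in\dbZ^3}e\bigl(Q(\bfv)\tau\bigr)$, with $e(w)=e^{2\pi iw}$; this is a holomorphic modular form of weight $\tfrac{3}{2}$ on $\Gamma_0(N)$ for some $N\ll p$, with a quadratic nebentypus, having $r_Q(n)$ as its $n$-th Fourier coefficient. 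By the Siegel--Weil formula the projection of $\theta_Q$ onto the Eisenstein subspace is the genus theta series of $Q$, so $\theta_Q=\calE_Q+f$, where the $n$-th coefficient of $\calE_Q$ is the genus representation number $r_{\mathrm{gen}}(n)$ and $f$ is a cusp form of weight $\tfrac{3}{2}$ and level $N$. Since $r_Q(n)=r_{\mathrm{gen}}(n)+a_f(n)$, it is enough to show that $r_{\mathrm{gen}}(n)>|a_f(n)|$ whenever (i)--(iii) hold.

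For the main term I would use the product formula $r_{\mathrm{gen}}(n)\asymp n^{1/2}\prod_q\beta_q(n)$, in which $\beta_q(n)$ denotes the $q$-adic representation density of $Q$. Hypotheses (i) and (ii) are exactly what makes every $\beta_q(n)$ positive: the congruence modulo $16$ settles $q=2$; coprimality of $n$ to $6p$, together with the fact that $x^2+y^2$ represents every nonzero residue class to any power of an odd prime, settles $q=3$ and $q=p$; and the generic primes are automatic. Quantitatively, the densities at $2$, $3$, $p$ and $\infty$ together contribute a factor $\gg p^{-A_1}$ for an explicit constant $A_1$; the densities at the primes dividing the square part $m$ of $n$ are inflated — this being Golubeva's ``square factor'' device, and the source of the power of $m$ in condition (iii) — and the product over the remaining primes is $\gg_\eps n^{-\eps}$ by Siegel's (ineffective) lower bound for $L(1,\chi)$. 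Collecting these estimates yields a lower bound of the shape $r_{\mathrm{gen}}(n)\gg_\del n^{1/2}m\,p^{-A_1-\del}$.

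For the error term one must bound the Fourier coefficient $a_f(n)$ of the cusp form $f$. Writing $n=tm^2$ as in (iii), I would first check, as in the work of Duke and Schulze--Pillot, that $f$ has no component along unary theta series, so that the Shimura correspondence attaches to $f$ genuine cusp forms of integral weight; the attendant multiplicativity expresses $a_f(tm^2)$ in terms of $a_f(t)$ and the coefficients of the weight-$2$ Shimura lift, the latter being $O(m^{1/2+\eps})$ by Deligne's theorem. One then invokes Iwaniec's estimate for Fourier coefficients of half-integral weight cusp forms, in the form $a_f(t)\ll_\eps t^{3/7+\eps}$ for squarefree $t$, with the dependence on the level $N\ll p$ rendered explicit, say $\ll p^{A_2}$; this gives $a_f(n)\ll_\del t^{3/7}m^{1/2}p^{A_2+\del}$. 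Comparing the two bounds, the main term overtakes the error precisely when $nm^{12}$ exceeds a suitable multiple of $p^{21+\del}$, the exponents arising from the saving $3/7=\tfrac{1}{2}-\tfrac{1}{14}$ in Iwaniec's subconvex estimate together with the bookkeeping of the powers of $m$ and $p$; then $r_Q(n)=r_{\mathrm{gen}}(n)+a_f(n)>0$, as required. (Assuming the Ramanujan Conjecture $a_f(t)\ll_\eps t^{1/4+\eps}$ in place of Iwaniec's bound, the same scheme would deliver an exponent smaller than $21$, in line with Theorem \ref{theorem1.5}.)

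The principal obstacle is the uniformity in the level. Iwaniec's theorem is customarily stated for a fixed form, whereas here the level $N$ grows with $p$, so one needs a version in which the implied constant depends on $N$ through only a fixed power, with an exponent sharp enough that — after the gain afforded by the square factor in the main term — the final threshold remains $p^{21+\del}$; making the level-dependence explicit throughout the amplification argument underlying Iwaniec's bound, and in parallel pinning down the $p$-adic density of $Q$, is where the genuine effort lies. The positive parameter $\del$ and the ineffectivity of $C(\del)$ are then forced upon one by the use of Siegel's theorem in the lower bound for $r_{\mathrm{gen}}(n)$.
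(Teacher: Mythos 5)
Your strategy is the right one, and it is essentially the argument behind the quoted result: the paper does not reprove this theorem but simply cites \cite[Theorem 2]{Gol2008}, whose proof is exactly the decomposition $\theta_Q=\calE_Q+f$, a Siegel-type lower bound for the genus representation numbers, and the Duke--Iwaniec bound for squarefree arguments combined with the Shimura lift and Deligne's theorem to exploit the square factor $m^2$ (see \cite[Lemma 1]{Blo2008}). The only content the paper itself adds is the passage from integer to natural-number solutions, which you correctly dispose of in passing: the degenerate representations with $xyz=0$ number $O_\eps(n^\eps)$, against a total count $\gg_\eps n^{1/2-\eps}p^{-1/2}$.

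One numerical point does not cohere, and it is precisely the point that determines the exponents in condition (iii). The Iwaniec--Duke bound for squarefree $t$ is $\fhat(t)\ll_\eps t^{13/28+\eps}$, a saving of $\tfrac{1}{28}$ over the exponent $\tfrac12$, not $t^{3/7+\eps}$ with a saving of $\tfrac{1}{14}$; you appear to have conflated the exponent for $\fhat(t)$ with the exponent $\tfrac{13}{14}$ occurring in the bound for $\sum_k|\phihat_k(t)|^2$. The exponent $12$ on $m$ is forced by this: with main term $\asymp t^{1/2}m$ (up to the $p$- and $\eps$-losses) against an error $\ll t^{\tet}m^{1/2}p^{a}$, the comparison reads $t^{1/2-\tet}m^{1/2}\gg p^{a+1/2}$, and raising to the power $1/(1-2\tet)$ produces the quantity $tm^{1/(1-2\tet)}$; only $\tet=\tfrac{13}{28}$ gives $1/(1-2\tet)=14$ and hence the threshold $nm^{12}=tm^{14}$ against $p^{21}$, whereas your $\tet=\tfrac37$ would yield $nm^{5}$ and a different power of $p$. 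Since $\tfrac37<\tfrac{13}{28}$ you are in fact invoking a stronger estimate than is known. Relatedly, your lower bound $r_{\mathrm{gen}}(n)\gg n^{1/2}m\,p^{-A_1-\del}$ carries one factor of $m$ too many: $n^{1/2}=t^{1/2}m$ already accounts for the square part (the local densities at primes dividing $m$ are merely bounded below, not inflated), and the gain from Golubeva's device lives entirely on the cuspidal side, where $m$ enters only as $m^{1/2}$. The architecture of your proof is sound, but the bookkeeping as written does not reproduce the stated exponents $m^{12}$ and $p^{21+\del}$.
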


\begin{proof} This is in all essentials \cite[Theorem 2]{Gol2008}. We have modified the conclusion to assert that the integers $x$, $y$ and $z$ are natural numbers. Such follows from the argument of the proof of \cite[Theorem 2]{Gol2008}, on noting that the number of representations of $n$ in the shape (\ref{2.2}) with $xyz=0$ is plainly $O_\eps(n^\eps)$. Meanwhile, for each $\eps>0$, the total number of solutions delivered by Golubeva's argument is at least $C_\eps n^{1/2-\eps}p^{-1/2}$, for a positive constant $C_\eps$ depending at most on $\eps$.\par

It may be helpful to the reader examining the proof of \cite[Theorem 2]{Gol2008} to think of $m^2$ as the largest square divisor of $n$, so that $t$ is squarefree. The bounds for the $n$-th Fourier coefficients of eigenforms, for arbitrary $n$, reported by Golubeva are then discussed in detail in \cite[Lemma 1]{Blo2008}. See the sketch proof of Theorem \ref{theorem4.1} below for more on this matter.\end{proof}

We next set up the machinery to construct the integer $q$ that plays a key role in the above sketch. This rests on the existence of small primes $p_j$ having the property that a given integer $m$ is a $k_j$-th power residue modulo $p_j$. Current technology fails to deliver such a conclusion in general without appeal to a suitable Riemann Hypothesis.

\begin{lemma}\label{lemma2.2} Let $k,D\in \dbN$, and suppose that $m$ is a large natural number with $m\gg D^A$, for some fixed $A>0$. Then there exists a prime $\varpi$ with $(mD,\varpi)=1$ having the property that the congruence $y^k\equiv m\mmod{\varpi^h}$ is soluble for every natural number $h$. When $k$ is odd, one may choose $\varpi$ with $\varpi\ll_k \log m$. When $k$ is even, meanwhile, provided that the Riemann Hypothesis holds for the Dedekind zeta function $\zet_L(s)$, where $L=\dbQ(e^{2\pi i/k},\sqrt[k]{m})$, one may choose $\varpi$ with $\varpi\ll_{k,\eps} (\log m)^{2+\eps}$.
\end{lemma}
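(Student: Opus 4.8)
The strategy is to produce $\varpi$ as a prime splitting completely (or behaving appropriately) in the Kummer extension $L=\dbQ(\zeta_k,\sqrt[k]{m})$, since a rational prime $\varpi$ unramified in $L$ satisfies $\varpi\equiv 1\mmod k$ and has $m$ a $k$-th power residue mod $\varpi$ precisely when it splits completely in $L$. Once $m$ is a $k$-th power residue mod $\varpi$ and $\varpi\nmid km$ (so that $\varpi$ is unramified and the reduction of $m$ is a unit), a Hensel-lifting argument promotes the solubility of $y^k\equiv m\mmod{\varpi}$ to solubility of $y^k\equiv m\mmod{\varpi^h}$ for every $h$: the derivative $ky^{k-1}$ is a $\varpi$-adic unit at any solution, so Newton iteration converges $\varpi$-adically. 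Thus the whole lemma reduces to exhibiting a small prime that splits completely in $L$ and avoids the finitely many bad primes dividing $mD$.

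For the unconditional case $k$ odd, the key point is that $L$ is \emph{not} too large and one can find such a split prime of size $O_k(\log m)$ by an elementary argument rather than an effective Chebotarev statement. Here I would follow the classical device: consider the integer $N=\prod_{p\le y}p$ or, more efficiently, argue that if \emph{no} prime $\varpi\ll_k\log m$ had $m$ as a $k$-th power residue, then $m$ would be forced to be "close to" a perfect $k$-th power in a quantitative sense that fails for generic large $m$; alternatively, and more robustly, use the fact that when $k$ is odd the relevant obstruction is governed by a cyclic (abelian) piece and one can invoke the prime-counting bound for primes in arithmetic progressions together with a sieve on the factorization of $m$. Concretely: the density of primes $\varpi$ for which $m$ is a $k$-th power residue is $1/[\dbQ(\zeta_k,\sqrt[k]{m}):\dbQ]\ge 1/(k\cdot k)$, a positive constant depending only on $k$, so \emph{some} such prime must occur below $c_k\log m$ unless the associated $L$-functions conspire — and for odd $k$ the relevant characters are such that the trivial zero-free region (or even just Mertens-type estimates, as in the treatment of the least prime in a progression without GRH in bounded-modulus situations) suffices, because $[L:\dbQ]$ is bounded in terms of $k$ only. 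The main technical content is checking that the degree $[L:\dbQ]$ is genuinely $O_k(1)$ — it divides $k\varphi(k)$ — so that the implied "bounded modulus" regime is in force and the unconditional least-prime bound $O_k(\log m)$ applies.

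For $k$ even, the obstruction is real: the extension $L$ need not be abelian over $\dbQ$, and the Kummer generator $\sqrt[k]{m}$ introduces genuine non-abelian structure (the issue being the interaction of $\sqrt{m}$ or higher $2$-power roots with the cyclotomic field), so one cannot sidestep a Riemann Hypothesis. I would invoke the conditional effective Chebotarev density theorem of Lagarias–Odlyzko in the form: under GRH for $\zeta_L(s)$, the least prime splitting completely in $L$ (avoiding a bounded set of bad primes, say those dividing $D$ and the discriminant) is $O\bigl((\log d_L)^2(\log\log d_L)^{O(1)}\bigr)$, or in the sharper Bach-style formulation $\ll(\log d_L)^2$. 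Since $\log d_L\ll_k\log m$ — the discriminant of $L=\dbQ(\zeta_k,\sqrt[k]{m})$ is bounded by a $k$-dependent power of $m$, as only the primes dividing $km$ ramify and their exponents are $O_k(1)$ — this yields $\varpi\ll_{k,\eps}(\log m)^{2+\eps}$ after absorbing the $\log\log$ factors into the $\eps$. One then discards the $O_k(1)$ primes dividing $mD$ (harmless since there are only finitely many and they are all $\gg$ nothing, i.e.\ we simply take the next split prime), completing the case.

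**Main obstacle.** The chief difficulty is the even-$k$ case: extracting the clean bound $\varpi\ll_{k,\eps}(\log m)^{2+\eps}$ requires (a) a sufficiently sharp conditional effective Chebotarev theorem — the naive Lagarias–Odlyzko bound carries extra $\log d_L\cdot\log\log d_L$ factors and one must be careful to land at exponent $2+\eps$ and not $2+\text{something worse}$ — and (b) a clean bound $\log d_L\ll_k\log m$, which needs the ramification in $L$ to be controlled: the primes above $2$ and above primes dividing $k$ can be wildly ramified, but their contribution to $\log d_L$ is still only $O_k(1)$, while primes dividing $m$ but not $k$ are tamely ramified with exponent at most $k-1$, contributing $\ll_k\log m$. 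Assembling these two ingredients carefully, and making sure the RH invoked is exactly RH for $\zeta_L(s)$ (which by Aramata–Brauer factors into Artin/Hecke $L$-functions, so it is implied by — though formally weaker than — GRH for Dirichlet $L$-functions only in the abelian case; here one genuinely needs RH for this specific Dedekind zeta function, consistent with the lemma's hypothesis), is where the real work lies.
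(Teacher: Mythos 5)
Your treatment of the even case is essentially the paper's argument: under RH for $\zeta_L(s)$ one applies the Lagarias--Odlyzko effective Chebotarev theorem to produce a degree-one prime ideal of $\dbQ(\zeta_k)$ with trivial Artin symbol in $L/\dbQ(\zeta_k)$, whose norm is a rational prime $\varpi\ll(\log d_L)^2(\log\log d_L)^4\ll_{k,\eps}(\log m)^{2+\eps}$ coprime to $kmD$, followed by Hensel lifting; your observation that $\log d_L\ll_k\log m$ is the right supporting estimate.

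The odd case, however, contains a genuine gap. You reduce to finding a small prime splitting completely in $L=\dbQ(\zeta_k,\sqrt[k]{m})$ and claim that because $[L:\dbQ]=O_k(1)$ one is in a ``bounded modulus'' regime where an unconditional least-prime bound $O_k(\log m)$ applies. This is not so: what governs the least completely split prime in effective Chebotarev is the conductor--discriminant of $L$, not its degree, and $d_L$ grows like a power of $m$ because the primes dividing $m$ ramify. Unconditionally the best available bounds are polynomial in $d_L$, hence polynomial in $m$ --- useless here --- and indeed if your argument worked it would equally dispose of the even case without any Riemann Hypothesis. (Your stated dichotomy is also off: $\dbQ(\zeta_3,\sqrt[3]{m})$ is already a non-abelian $S_3$-extension, so abelianness is not what separates odd from even $k$.) The device you are missing is that for odd $k$ one need not make $m$ a $k$-th power residue in any Chebotarev sense at all: choose $\varpi\equiv 2\pmod{k}$ with $\varpi\nmid mD$. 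Then $(\varpi-1,k)=1$, so $y\mapsto y^k$ is a bijection on $(\dbZ/\varpi\dbZ)^\times$ and \emph{every} unit is a $k$-th power residue; the condition on $m$ becomes vacuous, and one only has to avoid the $O(\log m/\log\log m)$ prime divisors of $mD$, which the prime number theorem in the fixed-modulus progression $2\pmod{k}$ accomplishes below $C_k\log m$. This trick is unavailable for even $k$, since $2\mid\varpi-1$ for every odd prime $\varpi$, which is exactly why the even case genuinely requires the conditional Chebotarev input.
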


\begin{proof} Consider first the situation in which $k$ is odd. We take $\varpi$ to be any prime number with $\varpi\equiv 2\mmod{k}$ and $\varpi\nmid mD$. Since the number of prime divisors of $mD$ is $O(\log m/\log \log m)$, it follows from the Prime Number Theorem in arithmetic progressions that such a prime exists with $\varpi\ll_k \log m$. Since $(\varpi-1,k)=1$, it follows from Fermat's Little Theorem that the congruence $y^k\equiv m\mmod{\varpi}$ is soluble, and then an application of Hensel's Lemma reveals that for every integer $h$, the congruence $y^k\equiv m\mmod{\varpi^h}$ is soluble. This completes the proof of the lemma when $k$ is odd.\par

When $k$ is even, we must resort to higher reciprocity laws (see Milne \cite[Chapter V, especially p.162]{Mil2011} and Pollack \cite{Pol2012} for suitable background material). Let $\zet_k=e^{2\pi i/k}$, and write $K=\dbQ(\zet_k)$ and $L=\dbQ(\zet_k,\sqrt[k]{m})$. Write $d_L$ for the discriminant of $L$ over $\dbQ$. Then it follows from the Dedekind-Kummer splitting criterion that for any prime ideal $\grp$ of $K$ relatively prime to $k$ and $m$, so that $\grp$ is unramified in $L$, the $k$th power residue symbol $\left({\displaystyle{\frac{m}{\grp}}}\right)_k$ is equal to $1$ if and only if $\grp$ splits in $L$. The latter in turn happens if and only if the Artin symbol $\left( {\displaystyle{\frac{L/K}{\grp}}}\right)$ is the identity in $\text{Gal}(L/K)$. Assuming the Riemann Hypothesis for the Dedekind zeta function $\zet_L(s)$, it follows from work of Lagarias and Odlyzko \cite{LO1977} that there exists a prime ideal $\grp$ of $K$, unramified in $L$, with $\left( {\displaystyle{\frac{L/K}{\grp}}}\right)$ equal to the identity and satisfying
$$N_{K/\dbQ}(\grp)\ll (\log d_L)^2(\log \log d_L)^4\ll_\eps (\log m)^{2+\eps}.$$
This work shows, moreover, that $N_{K/\dbQ}(\grp)$ may be chosen to be a rational prime, say $\varpi$, with $(\varpi,kmD)=1$. Thus we deduce that the congruence $y^k\equiv m\mmod{\varpi}$ is soluble for some rational prime $\varpi$ satisfying
$$\varpi\ll_\eps (\log m)^{2+\eps}\quad \text{and}\quad (kmD,\varpi)=1.$$
An application of Hensel's Lemma again now shows that the congruence $y^k\equiv m\mmod{\varpi^h}$ is soluble for every natural number $h$. This completes the proof of the lemma. 
\end{proof}

We note that the Riemann Hypothesis for the Dedekind zeta function $\zet_L(s)$ occurring in the statement of Lemma \ref{lemma2.2} follows from GRH.\par

We next make some preparations which facilitate the application of Lemma \ref{lemma2.2} in the level-lowering procedure of Golubeva. The first observation that we must keep in mind is that in order to apply Theorem \ref{theorem2.1}, one must ensure that the conditions (i) and (ii) are met. It transpires that some slightly delicate footwork is required as we proceed through the argument so as to ensure that such remains possible. In order to motivate some of this manoeuvring, we note that a modicum of computation confirms that when $p$ is an odd prime, then for any integer $\lam$ the expression
$$x^2+y^2+6pz^2+2\lam^2p^3$$
represents all of the odd residue classes modulo $16$, with one at least of $x$ and $y$ odd. It follows as a consequence of this observation that we may concentrate on condition (i) of Theorem \ref{theorem2.1} in what follows.\par

We now briefly describe the setting for the next lemma. We suppose throughout that $n$ is a sufficiently large natural number. It is convenient to adopt the notation of writing $K$ for the product $k_1\ldots k_{t-1}$. We consider fixed distinct prime numbers $\varpi_1,\ldots ,\varpi_{t-1}$ satisfying the conditions $(30K,\varpi_i)=1$ $(1\le i\le t-1)$ and $\varpi_{t-1}\nmid n$. Since the number of distinct prime divisors of $n$ is $O(\log n/\log \log n)$, we may suppose in addition that
\begin{equation}\label{2.3}
K^{10}<\varpi_{t-1}\ll_K \log n\quad \text{and}\quad K^{10}<\varpi_i\ll_K 1\quad (1\le i\le t-2).
\end{equation}
Finally, we write $\Ome_u=\varpi_1\ldots \varpi_u$, with the familiar convention that $\Ome_0=1$.

\begin{lemma}\label{lemma2.3}
Assume the truth of GRH. Then there exist $y,h\in \dbN$, and a prime number $\varpi$ with $\varpi\le (\log n)^3$, having the following properties:
\item{(a)} $(30n\Ome_{t-1},\varpi)=1$ and $y^{k_t}\equiv n\mmod{\varpi^{6Kh}}$;
\item{(b)} $n^{1/k_t}(\log n)^{-20tK}\le \varpi^{6Kh}\le y\le n^{1/k_t}(\log n)^{-t}$;
\item{(c)} $(n-y^{k_t}, 10\Ome_{t-1})=1$ and $n-y^{k_t}\not\equiv 2\mmod{3}$;
\item{(d)} when $t\ge 2$, then the integer $\varpi^{-6Kh}(n-y^{k_t})$ is a $k_{t-1}$-th power residue modulo $\varpi_{t-1}$.\vskip.1cm
\noindent When $k_t$ is odd, this conclusion is independent of GRH.
\end{lemma}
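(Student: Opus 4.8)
The plan is to construct $y$ as a solution of the congruence $y^{k_t}\equiv n\pmod{\varpi^{6Kh}}$ lying in a short interval just below $n^{1/k_t}$, after manufacturing a suitable small prime $\varpi$ and exponent $h$, and then to dispose of the residual conditions (c) and (d) by confining $y$ to a single residue class modulo $30\Ome_{t-1}$. I would first apply Lemma~\ref{lemma2.2} with $k=k_t$, $m=n$ and $D=30K\Ome_{t-1}$; since $\Ome_{t-1}\ll_K\log n$, the hypothesis $m\gg D^A$ holds for large $n$, and the lemma yields a prime $\varpi$ with $(30n\Ome_{t-1},\varpi)=1$ for which $y^{k_t}\equiv n\pmod{\varpi^H}$ is soluble for every $H\in\dbN$, with $\varpi\ll_{k_t}\log n$ when $k_t$ is odd and $\varpi\ll_{k_t,\eps}(\log n)^{2+\eps}$ when $k_t$ is even (the latter on the Riemann Hypothesis for $\zet_L(s)$, which follows from GRH as noted after Lemma~\ref{lemma2.2}). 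In either case $\varpi\le(\log n)^3$ for large $n$, which together with the coprimality accounts for the assertions about $\varpi$ and for the first half of (a). I would then take $h$ to be the least positive integer with $\varpi^{6Kh}\ge n^{1/k_t}(\log n)^{-20tK}$; minimality together with $\varpi^{6K}\le(\log n)^{18K}$ gives $\varpi^{6Kh}\le n^{1/k_t}(\log n)^{K(18-20t)}$, and since $K(18-20t)<-t$ for every $t\ge1$ this places $\varpi^{6Kh}$ safely below $n^{1/k_t}(\log n)^{-t}$, leaving room for the remaining inequalities of (b).

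The solutions of $y^{k_t}\equiv n\pmod{\varpi^{6Kh}}$ comprise at least one residue class modulo $\varpi^{6Kh}$. I would additionally require $y\equiv c_0\pmod{30\Ome_{t-1}}$ for a residue $c_0$ chosen below; as $(\varpi,30\Ome_{t-1})=1$, the Chinese Remainder Theorem makes the combined conditions define a non-empty union of residue classes modulo $M=\varpi^{6Kh}\cdot 30\Ome_{t-1}$. Using $\varpi^{6Kh}\le n^{1/k_t}(\log n)^{K(18-20t)}$ and $\Ome_{t-1}\ll_K\log n$, a direct check --- separating the case $t=1$ (where $\Ome_0=1$) from $t\ge2$ (where $K\ge 2^{t-1}$) --- shows $M\le\tfrac12 n^{1/k_t}(\log n)^{-t}$ for large $n$, so the interval $[\varpi^{6Kh},\,n^{1/k_t}(\log n)^{-t}]$, whose length exceeds $M$, contains such a $y$. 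Any such $y$ then satisfies (a) and (b), and in particular $y^{k_t}\le n(\log n)^{-tk_t}<n$, so $n-y^{k_t}>0$.

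It remains to select $c_0$ modulo $30\Ome_{t-1}=2\cdot3\cdot5\cdot\varpi_1\cdots\varpi_{t-1}$ securing (c) and (d); by the Chinese Remainder Theorem it is enough to fix a suitable residue of $y$ modulo each prime divisor. For $\ell\in\{2,5\}$ and for $\ell=\varpi_i$ with $1\le i\le t-2$ one needs only $\ell\nmid n-y^{k_t}$, which holds for $y\equiv 0\pmod{\ell}$ if $\ell\nmid n$ and for $y\equiv 1\pmod{\ell}$ if $\ell\mid n$; for $\ell=3$ one needs $n-y^{k_t}\not\equiv 2\pmod 3$, which is attainable because the values taken by $y^{k_t}\bmod 3$ --- namely $\{0,1\}$ when $k_t$ is even and all of $\{0,1,2\}$ when $k_t$ is odd --- necessarily meet $\{n-1,n\}\bmod 3$. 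The one substantive case is $\ell=\varpi_{t-1}$, where one must simultaneously arrange $\varpi_{t-1}\nmid n-y^{k_t}$ and that $\varpi^{-6Kh}(n-y^{k_t})$ be a $k_{t-1}$-th power residue modulo $\varpi_{t-1}$. I would achieve this by finding $u,v\in\dbF_{\varpi_{t-1}}$ with $v\ne 0$ and $u^{k_t}+\varpi^{6Kh}v^{k_{t-1}}\equiv n\pmod{\varpi_{t-1}}$ and taking $y\equiv u$: then $n-y^{k_t}\equiv \varpi^{6Kh}v^{k_{t-1}}$, which is nonzero (as $(\varpi,\varpi_{t-1})=1$ and $v\ne0$) and whose product with $\varpi^{-6Kh}$ is the $k_{t-1}$-th power residue $v^{k_{t-1}}$. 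The existence of such $(u,v)$ follows from the Weil bound (equivalently, from standard estimates for Jacobi sums): the diagonal equation $u^{k_t}+\varpi^{6Kh}v^{k_{t-1}}=n$ over $\dbF_{\varpi_{t-1}}$ has $\varpi_{t-1}+O_{\bfk}(\varpi_{t-1}^{1/2})$ solutions, only $O_{\bfk}(1)$ of which have $v=0$, so the count with $v\ne0$ is positive once $\varpi_{t-1}$ exceeds a constant depending on $\bfk$ --- and this is guaranteed by $\varpi_{t-1}>K^{10}$, with $\varpi_{t-1}$ free to be as large as a fixed multiple of $\log n$.

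I expect the main obstacle to be exactly this coordination step: verifying that the narrow size window of (b), which permits only a bounded power of $\log n$ of slack, can accommodate the residue class modulo $\varpi^{6Kh}\cdot 30\Ome_{t-1}$ forced by (c) and (d), and, within this, that the diagonal equation modulo $\varpi_{t-1}$ underpinning the $k_{t-1}$-th power residue condition of (d) is soluble with $v\ne0$. The final assertion --- that the conclusion is unconditional when $k_t$ is odd --- is then immediate, since Lemma~\ref{lemma2.2} requires no hypothesis for odd $k_t$, and the choice of $h$, the lattice-point count for $y$, and the Weil bound are all unconditional.
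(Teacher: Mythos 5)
Your proposal is correct and follows essentially the same route as the paper: Lemma~\ref{lemma2.2} supplies $\varpi$, Hensel/CRT confine $y$ to residue classes modulo $2,3,5,\varpi_1,\ldots,\varpi_{t-2}$ securing (c), and Weil's bound for the diagonal congruence modulo $\varpi_{t-1}$ secures (d). The only cosmetic difference is bookkeeping: the paper takes $h$ maximal with $\varpi^{6Kh}\le n^{1/k_t}(\log n)^{-t-2}$ and sets $y=z+g\varpi^{6Kh}$ with $1\le g\le 30\Ome_{t-1}$, whereas you take $h$ minimal above the lower threshold and locate $y$ as a lattice point in the interval --- equivalent manoeuvres.
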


\begin{proof} When $k_t$ is even, assume the truth of GRH. Then it follows from Lemma \ref{lemma2.2} that when $n$ is sufficiently large, then there exists a prime number $\varpi$ with $\varpi\le (\log n)^3$ and $(30n\Ome_{t-1},\varpi)=1$ such that, for any $l\in \dbN$, there exists a solution of the congruence
\begin{equation}\label{2.4}
z^{k_t}\equiv n\mmod{\varpi^l}.
\end{equation}
We choose $l$ to be the largest multiple of $6K$, say $l=6Kh$, having the property that
$$\varpi^l\le n^{1/k_t}(\log n)^{-t-2}.$$

\par Let $z$ denote any solution of (\ref{2.4}) with $1\le z\le \varpi^l$. Also, define the integer $g_3(n)$ by putting
$$g_3(n)=\begin{cases} 0,&\text{when $n\not\equiv 2\mmod{3}$,}\\
1,&\text{when $n\equiv 2\mmod{3}$.}\end{cases}$$
When $t\ge 2$ it follows from Weil \cite{Wei1949} that, since $(n\varpi,\varpi_{t-1})=1$, the congruence
$$\varpi^{-l}n\equiv w^{k_{t-1}}+\varpi^{-l}v^{k_t}\mmod{\varpi_{t-1}}$$
possesses a solution $w,v$ with $\varpi_{t-1}\nmid wv$. When $t\ge 2$, fix any one such solution $w,v$. In addition, when $q\in \{2,5,\varpi_1,\ldots ,\varpi_{t-2}\}$, put
$$g_q(n)=\begin{cases} 0,&\text{when $(n,q)=1$,}\\
1,&\text{when $q|n$.}\end{cases}$$
Then since $(30\Ome_{t-1},\varpi)=1$, it follows from the Chinese Remainder Theorem that there exists an integer $g$ with $1\le g\le 30\Ome_{t-1}$ having the property that
$$z+g\varpi^l\equiv g_q(n)\mmod{q}\quad (q=2,3,5,\varpi_1,\ldots ,\varpi_{t-2}),$$
and such that when $t\ge 2$ one has
$$z+g\varpi^l\equiv v\mmod{\varpi_{t-1}}.$$
We consequently have
$$(n-(z+g\varpi^l)^{k_t},10\Ome_{t-1})=1\quad \text{and}\quad n-(z+g\varpi^l)^{k_t}\not\equiv 2\mmod{3}.$$
In addition, when $t\ge 2$ the congruence
$$\varpi^{-l}(n-(z+g\varpi^l)^{k_t})\equiv w^{k_{t-1}}\mmod{\varpi_{t-1}}$$
is soluble with $\varpi_{t-1}\nmid w$.\par

We now take $y=z+g\varpi^l$, and note that from (\ref{2.3}) we have the bounds
$$y\le 31\Ome_{t-1}\varpi^l\le (\log n)^2(n^{1/k_t}(\log n)^{-t-2})\le n^{1/k_t}(\log n)^{-t}$$
and
$$y\ge \varpi^l\ge n^{1/k_t}(\log n)^{-t-2-18K}\ge n^{1/k_t}(\log n)^{-20tK}.$$
The conclusion of the lemma is now immediate.
\end{proof}

An application of Lemma \ref{lemma2.3} sows the seeds for the iteration of a result with a similar flavour.

\begin{lemma}\label{lemma2.4}
Let $u$ be a natural number with $1\le u\le t-1$, and let $m\in \dbN$ satisfy $\log m\gg \log n$. Suppose that $m$ is a $k_u$-th power residue modulo $\varpi_u$, and in particular that $\varpi_u\nmid m$. In addition, when $u\ge 2$ suppose that $\varpi_{u-1}\nmid m$. Then there exist natural numbers $y$ and $h$ having the following properties:
\item{(a)} $y^{k_u}\equiv m \mmod{\varpi_u^{6Kh}}$;
\item{(b)} $m^{1/k_u}(\log m)^{-20uk}\le \varpi_u^{6Kh}\le y\le m^{1/k_u}(\log m)^{-u}$;
\item{(c)} $(m-y^{k_u},10\Ome_{u-1})=1$ and $m-y^{k_u}\not\equiv 2\mmod{3}$;
\item{(d)} when $u\ge 2$, then the integer $\varpi_u^{-6Kh}(m-y^{k_u})$ is a $k_{u-1}$-th power residue modulo $\varpi_{u-1}$.
\end{lemma}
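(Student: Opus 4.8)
The plan is to follow closely the template of the proof of Lemma~\ref{lemma2.3}, the one simplification being that the role played there by the small prime $\varpi$ manufactured via Lemma~\ref{lemma2.2} is here played by the prime $\varpi_u$ that is handed to us; in particular, since $\varpi_u$ is already in hand, no appeal to GRH is required, and the lemma is unconditional. The starting point is that the hypothesis that $m$ is a $k_u$-th power residue modulo $\varpi_u$, together with $(30K,\varpi_u)=1$ (whence $\varpi_u\nmid k_u$, as $k_u\mid K$) and $\varpi_u\nmid m$, places us in a position to apply Hensel's Lemma: the assumed solution of $z^{k_u}\equiv m\mmod{\varpi_u}$ lifts to a solution of $z^{k_u}\equiv m\mmod{\varpi_u^l}$ for every $l\in\dbN$, and any such $z$ is automatically coprime to $\varpi_u$. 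Following Golubeva's level-lowering device, I would then take $l=6Kh$ to be the largest multiple of $6K$ with $\varpi_u^l\le m^{1/k_u}(\log m)^{-u-2}$; since $\log m\gg\log n$ and $\varpi_u\ll_K\log n$, this is a genuine constraint once $n$ is large, so $h\ge1$, and maximality yields $\varpi_u^{6Kh}>m^{1/k_u}(\log m)^{-u-2}\varpi_u^{-6K}$. Combined with the crude bound $\varpi_u^{6K}\ll_K(\log m)^{O_K(1)}$, this provides both of the estimates for $\varpi_u^{6Kh}$ demanded in part~(b).

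Next I would fix a solution $z$ of $z^{k_u}\equiv m\mmod{\varpi_u^l}$ with $1\le z\le\varpi_u^l$, and adjust it to the shape $y=z+g\varpi_u^l$ so as to control $y$ modulo $2,3,5,\varpi_1,\ldots,\varpi_{u-1}$. When $u\ge2$, the hypothesis $\varpi_{u-1}\nmid m$, together with $\varpi_{u-1}\nmid\varpi_u$ and $\varpi_{u-1}>K^{10}$ (which dominates $(k_uk_{u-1})^2$ since $k_uk_{u-1}\mid K$), allows Weil's bound \cite{Wei1949} to supply $w,v$ with $\varpi_{u-1}\nmid wv$ and $m\equiv\varpi_u^lw^{k_{u-1}}+v^{k_u}\mmod{\varpi_{u-1}}$, exactly as in Lemma~\ref{lemma2.3}. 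Defining the auxiliary quantities $g_q(m)$ (for $q\in\{2,5,\varpi_1,\ldots,\varpi_{u-2}\}$) and $g_3(m)$ precisely as in that proof, the Chinese Remainder Theorem --- applicable because $\varpi_u$ is coprime to each of $2,3,5,\varpi_1,\ldots,\varpi_{u-1}$ --- produces $g$ with $1\le g\le30\Ome_{u-1}$ for which $y\equiv g_q(m)\mmod{q}$ for each such $q$, and, when $u\ge2$, also $y\equiv v\mmod{\varpi_{u-1}}$. Now part~(a) is immediate from $y\equiv z\mmod{\varpi_u^l}$. For part~(c), the congruence imposed on $y$ forces $q\nmid(m-y^{k_u})$ for every prime $q\mid10\Ome_{u-1}$ --- directly from $y\equiv g_q(m)$ when $q\in\{2,5,\varpi_1,\ldots,\varpi_{u-2}\}$, and from $m-y^{k_u}\equiv\varpi_u^lw^{k_{u-1}}\mmod{\varpi_{u-1}}$ when $q=\varpi_{u-1}$ --- while $y\equiv g_3(m)\mmod3$ gives $m-y^{k_u}\not\equiv2\mmod3$. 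For part~(d), when $u\ge2$, the relation $\varpi_u^{6Kh}\mid(m-y^{k_u})$ from~(a) makes $\varpi_u^{-6Kh}(m-y^{k_u})$ an integer, and $\varpi_u^{-6Kh}(m-y^{k_u})\equiv w^{k_{u-1}}\mmod{\varpi_{u-1}}$ exhibits it as a $k_{u-1}$-th power residue. Finally, the upper bound in~(b) follows from $y\le(1+30\Ome_{u-1})\varpi_u^l$ on noting $\Ome_{u-1}\ll_K1$, valid since $u-1\le t-2$.

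The only place I expect genuine care to be needed is the log-power bookkeeping in part~(b): one must confirm that the exponent $-u-2$ in the definition of $l$, the loss of the factor $\varpi_u^{6K}\ll_K(\log m)^{O_K(1)}$ at the lower end, and the loss of the constant $1+30\Ome_{u-1}$ at the upper end, all remain within the permitted exponents $-20uK$ and $-u$. This comes down to elementary inequalities such as $u+2+O_K(1)\le20uK$, which hold with room to spare because $K=k_1\cdots k_{t-1}\ge2^{t-1}$ and $t\ge u+1$ while in any case $K\ge2$; similarly, the Weil estimate is available with enormous slack thanks to $\varpi_{u-1}>K^{10}$. No other step presents a real obstacle.
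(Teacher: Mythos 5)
Your proposal is correct and follows essentially the same route as the paper's own proof: Hensel lifting of the given $k_u$-th power residue, choice of $l=6Kh$ maximal subject to a bound of the form $\varpi_u^l\le m^{1/k_u}(\log m)^{-u-O(1)}$, a Weil-based solution modulo $\varpi_{u-1}$, and a Chinese Remainder adjustment $y=z+g\varpi_u^l$ with $1\le g\le 30\Ome_{u-1}$ to secure (c) and (d), followed by the same log-power bookkeeping for (b). The minor deviations (exponent $-u-2$ in place of the paper's $-u-1$, and the explicit remarks on $h\ge 1$ and on why Weil applies) are harmless or are small improvements in explicitness.
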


\begin{proof} Since $ m$ is a $k_u$-th power residue modulo $\varpi_u$ and $(\varpi_u,k_u)=1$, it follows from an application of Hensel's Lemma that, for any $l\in \dbN$, there exists a solution of the congruence
\begin{equation}\label{2.5}
z^{k_u}\equiv m\mmod{\varpi_u^l}.
\end{equation}
We choose $l$ to be the largest multiple of $6K$, say $l=6Kh$, having the property that
$$\varpi_u^l\le m^{1/k_u}(\log m)^{-u-1}.$$

\par Let $z$ denote any solution of (\ref{2.5}) with $1\le z\le \varpi_u^l$. We define the integers $g_q(m)$ just as in the argument of the proof of Lemma \ref{lemma2.3} when $q=2,3,5,\varpi_1,\ldots ,\varpi_{u-2}$, mutatis mutandis. When $u\ge 2$ it follows from Weil \cite{Wei1949} that, since $(m\varpi_u,\varpi_{u-1})=1$, the congruence
$$\varpi_u^{-l}m\equiv w^{k_{u-1}}+\varpi_u^{-l}v^{k_u}\mmod{\varpi_{u-1}}$$
possesses a solution $w,v$ with $\varpi_{u-1}\nmid wv$. When $u\ge 2$, we fix any one such solution $w,v$. Since $(30\Ome_{u-1},\varpi_u)=1$, it follows from the Chinese Remainder Theorem that there exists an integer $g$ with $1\le g\le 30\Ome_{u-1}$ having the property that
$$z+g\varpi_u^l\equiv g_q(m)\mmod{q}\quad (q=2,3,5,\varpi_1,\ldots ,\varpi_{u-2})$$
and such that when $u\ge 2$ one has
$$z+g\varpi_u^l\equiv v\mmod{\varpi_{u-1}}.$$
We consequently have
$$(m-(z+g\varpi_u^l)^{k_u},10\Ome_{u-1})=1\quad \text{and}\quad m-(z+g\varpi_u^l)^{k_u}\not\equiv 2\mmod{3}.$$
In addition, when $u\ge 2$, the congruence
$$\varpi_u^{-l}(m-(z+g\varpi_u^l)^{k_u})\equiv w^{k_{u-1}}\mmod{\varpi_{u-1}}$$
is soluble with $\varpi_{u-1}\nmid w$.\par

We now take $y=z+g\varpi_u^l$, and note that from (\ref{2.3}) we have the bounds
$$y\le 31\Ome_{u-1}\varpi_u^l\le (\log m)(m^{1/k_u}(\log m)^{-u-1})\le m^{1/k_u}(\log m)^{-1}$$
and
$$y\ge \varpi_u^l\ge m^{1/k_u}(\log m)^{-u-1-K}\ge m^{1/k_u}(\log m)^{-20uK}.$$
The conclusion of the lemma now follows.
\end{proof}

Finally, in order to avoid the assumption of GRH when the exponents $k_1,\ldots ,k_t$ are all even, we require an additional device. We recall our implicit assumption that $k_{t-1}\le k_t$.

\begin{lemma}\label{lemma2.5} Suppose that $t\ge 2$, and let $n$ be a sufficiently large natural number with $\varpi_{t-1}\nmid n$. Then there exist natural numbers $y_1$, $y_2$ and $h$ having the following properties:
\item{(a)} $y_1^{k_{t-1}}+y_2^{k_t}\equiv n\mmod{\varpi_{t-1}^{6Kh}}$;
\item{(b)} $n^{1/k_t}(\log n)^{-20tK}\le \varpi_{t-1}^{6Kh}\le \min\{y_1,y_2\}\le \max\{y_1,y_2\}\le n^{1/k_t}(\log n)^{-t}$;
\item{(c)} $(n-y_1^{k_{t-1}}-y_2^{k_t},10\Ome_{t-2})=1$ and $n-y_1^{k_{t-1}}-y_2^{k_t}\not\equiv 2\mmod{3}$;
\item{(d)} when $t\ge 3$, then the integer $\varpi_{t-1}^{-6Kh}(n-y_1^{k_{t-1}}-y_2^{k_t})$ is a $k_{t-2}$-th power residue modulo $\varpi_{t-2}$.\vskip.1cm
\end{lemma}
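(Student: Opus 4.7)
The plan is to mimic closely the argument of Lemma \ref{lemma2.4}, with $u$ playing the role of $t-1$, but to replace the single-variable congruence $y^{k_u}\equiv m\mmod{\varpi_u^l}$—whose solubility for even $k_u$ required the Chebotarev/GRH input of Lemma \ref{lemma2.2}—by the two-variable congruence $y_1^{k_{t-1}}+y_2^{k_t}\equiv n\mmod{\varpi_{t-1}^l}$. The key point is that the diagonal form $X^{k_{t-1}}+Y^{k_t}$ is non-trivial over $\dbF_{\varpi_{t-1}}$, so an unconditional Weil bound supplies the initial solutions without recourse to higher reciprocity.

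First I would invoke Weil's theorem \cite{Wei1949} on the equation
$$X^{k_{t-1}}+Y^{k_t}\equiv n\mmod{\varpi_{t-1}}$$
to produce a solution $(\alp,\bet)$ with $\varpi_{t-1}\nmid \alp\bet$; this is available since $\varpi_{t-1}>K^{10}$ is large in terms of the exponents by (\ref{2.3}) and $(n,\varpi_{t-1})=1$. Let $l=6Kh$ be the largest multiple of $6K$ with $\varpi_{t-1}^l\le n^{1/k_t}(\log n)^{-t-2}$. Since $k_{t-1}\alp^{k_{t-1}-1}$ is a unit modulo $\varpi_{t-1}$, Hensel's lemma lifts $\alp$ to $z_1\in \{1,\ldots,\varpi_{t-1}^l\}$ with $z_1^{k_{t-1}}\equiv n-\bet^{k_t}\mmod{\varpi_{t-1}^l}$; put $z_2=\bet$. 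When $t\ge 3$, I would apply Weil a second time to produce $(w,v_1,v_2)$ with $\varpi_{t-2}\nmid wv_1v_2$ satisfying
$$w^{k_{t-2}}+\varpi_{t-1}^{-l}(v_1^{k_{t-1}}+v_2^{k_t})\equiv \varpi_{t-1}^{-l}n\mmod{\varpi_{t-2}}.$$

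Next I would choose $g_1,g_2\in\{1,\ldots,30\Ome_{t-2}\}$ by the Chinese Remainder Theorem, setting $y_i=z_i+g_i\varpi_{t-1}^l$, so that $y_1\equiv 0\mmod{q}$ and $y_2\equiv g_q(n)\mmod{q}$ for each $q\in\{2,3,5,\varpi_1,\ldots,\varpi_{t-3}\}$ (with $g_q(n)$ as defined in the proof of Lemma \ref{lemma2.3}), and when $t\ge 3$ additionally $y_1\equiv v_1\mmod{\varpi_{t-2}}$ and $y_2\equiv v_2\mmod{\varpi_{t-2}}$. The moduli are pairwise coprime and coprime to $\varpi_{t-1}$ by (\ref{2.3}), so the system is consistent, and by shifting $g_i$ by $30\Ome_{t-2}$ if necessary we may insist $g_i\ge 1$. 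Condition (a) is then immediate. For condition (c), for each small $q$ one computes $y_1^{k_{t-1}}+y_2^{k_t}\equiv 0+g_q(n)^{k_t}\equiv g_q(n)\mmod{q}$, whence $n-y_1^{k_{t-1}}-y_2^{k_t}$ enjoys the desired coprimality (and non-$2\mmod{3}$) properties just as in Lemma \ref{lemma2.4}; when $t\ge 3$, the Weil-matched choice modulo $\varpi_{t-2}$ yields $n-y_1^{k_{t-1}}-y_2^{k_t}\equiv \varpi_{t-1}^l w^{k_{t-2}}\mmod{\varpi_{t-2}}$, which simultaneously delivers the $\varpi_{t-2}$-coprimality part of (c) and condition (d). The size bounds in (b) then follow exactly as in Lemma \ref{lemma2.4}: the maximality of $l$ together with $\varpi_{t-1}\ll_K\log n$ give $\varpi_{t-1}^l\ge n^{1/k_t}(\log n)^{-20tK}$, while $\max\{y_1,y_2\}\le 31\Ome_{t-2}\varpi_{t-1}^l\le n^{1/k_t}(\log n)^{-t}$.

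The main obstacle is orchestrating two variables to satisfy simultaneously one large-modulus congruence modulo $\varpi_{t-1}^{6Kh}$, several small-prime conditions, and—when $t\ge 3$—the $k_{t-2}$-th power residue requirement. The newfound flexibility afforded by a second variable is precisely what permits the unconditional Weil bound to stand in for Lemma \ref{lemma2.2}, thereby excising the GRH dependence when all $k_j$ are even. Care is required to ensure that the initial Weil solutions have all coordinates non-vanishing modulo the relevant prime, for otherwise Hensel lifting fails and the subsequent additive shifts could collapse the principal congruence modulo $\varpi_{t-1}^l$.
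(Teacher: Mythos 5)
Your argument is correct and follows essentially the same route as the paper: Weil's theorem produces a nonsingular solution of $u^{k_{t-1}}+v^{k_t}\equiv n\mmod{\varpi_{t-1}}$, Hensel's lemma (applied to the $k_{t-1}$-power variable with the $k_t$-power variable held fixed) lifts it modulo $\varpi_{t-1}^{6Kh}$, and a Chinese Remainder shift by multiples of $\varpi_{t-1}^{6Kh}$ secures conditions (c), (d) and the size bounds. The only (harmless) bookkeeping difference is that the paper freezes $y_2=w$ outright, defines the $g_q$ relative to $n-w^{k_t}$, and adjusts only $y_1$ via a two-variable Weil congruence modulo $\varpi_{t-2}$, whereas you adjust both variables and use a three-variable one; both versions work.
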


\begin{proof} Since $\varpi_{t-1}\nmid n$, it follows from Weil \cite{Wei1949} that the congruence
$$n\equiv u^{k_{t-1}}+v^{k_t}\mmod{\varpi_{t-1}}$$
possesses a solution $u,v$ with $\varpi_{t-1}\nmid uv$. Since $(k_{t-1},\varpi_{t-1})=1$, an application of Hensel's Lemma with $v$ fixed shows that, for any $l\in \dbN$, there exists a solution of the congruence
\begin{equation}\label{2.6}
n\equiv z^{k_{t-1}}+w^{k_t}\mmod{\varpi_{t-1}^l},
\end{equation}
with
$$z\equiv u\mmod{\varpi_{t-1}}\quad \text{and}\quad w\equiv v\mmod{\varpi_{t-1}}.$$
We choose $l$ to be the largest multiple of $6K$, say $l=6Kh$, having the property that
$$\varpi_{t-1}^l\le n^{1/k_t}(\log n)^{-t-2}.$$
Let $z,w$ denote any solution of (\ref{2.6}) with $\varpi_{t-1}^l<z,w\le 2\varpi_{t-1}^l$. When $t\ge 3$, we may proceed as in the argument of the proof of Lemma \ref{lemma2.3} to show that the congruence
$$\varpi_{t-1}^{-l}(n-w^{k_t})\equiv x^{k_{t-2}}+\varpi_{t-1}^{-l}y^{k_{t-1}}\mmod{\varpi_{t-2}}$$
possesses a solution $x,y$ with $\varpi_{t-2}\nmid xy$. Note here that
$$(n-w^{k_t},\varpi_{t-1})=(z^{k_{t-1}},\varpi_{t-1})=(u^{k_{t-1}},\varpi_{t-1})=1.$$

\par When $t\ge 3$, fix any one such solution $x,y$. In addition, when $1\le i<t-2$, define
$$g_{\varpi_i}(n)=\begin{cases} 0,&\text{when $(n-w^{k_t},\varpi_i)=1$,}\\
1,&\text{when $\varpi_i|(n-w^{k_t})$,}\end{cases}$$
and define $g_q(n)$ likewise when $q=2,5$. When $q=3$ we define $g_3(n)$ by putting
$$g_3(n)=\begin{cases} 0,&\text{when $n-w^{k_t}\not\equiv 2\mmod{3}$,}\\
1,&\text{when $n-w^{k_t}\equiv 2\mmod{3}$.}\end{cases}$$
Then since $(30\Ome_{t-2},\varpi_{t-1})=1$, it follows from the Chinese Remainder Theorem that there exists an integer $g$ with $1\le g\le 30\Ome_{t-2}$ having the property that
$$z+g\varpi_{t-1}^l\equiv g_q(n)\mmod{q}\quad (q=2,3,5,\varpi_1,\ldots ,\varpi_{t-3}),$$
and such that when $t\ge 3$ one has
$$z+g\varpi_{t-1}^l\equiv y\mmod{\varpi_{t-2}}.$$
We consequently have
$$(n-w^{k_t}-(z+g\varpi_{t-1}^l)^{k_{t-1}},10\Ome_{t-2})=1$$
and
$$n-w^{k_t}-(z+g\varpi_{t-1}^l)^{k_{t-1}}\not\equiv 2\mmod{3}.$$
In addition, when $t\ge 3$, the congruence
$$\varpi_{t-1}^{-l}(n-w^{k_t}-(z+g\varpi_{t-1}^l)^{k_{t-1}})\equiv u^{k_{t-2}}\mmod{\varpi_{t-2}}$$
is soluble with $\varpi_{t-2}\nmid u$.\par

We now take $y_1=z+g\varpi_{t-1}^l$ and $y_2=w$, and note that
$$\max\{y_1,y_2\}\le 31\Ome_{t-2}\varpi_{t-1}^l\le (\log n)^2(n^{1/k_t}(\log n)^{-t-2})=n^{1/k_t}(\log n)^{-t}$$
and
$$\min\{y_1,y_2\}\ge \varpi_{t-1}^l\ge n^{1/k_t}(\log n)^{-t-2-6K}\ge n^{1/k_t}(\log n)^{-20tK}.$$
The conclusion of the lemma is now immediate.
\end{proof}

\section{Golubeva's method: an iterative process}
We now proceed in an iterative fashion. We focus in our main discussion on the most interesting situations in which $\gam(\bfk)>\tfrac{2}{3}$, though later we sketch how to modify this central argument so as to handle the easier cases in which $\gam(\bfk)\le \tfrac{2}{3}$. In addition, our initial focus is on the situation where either (i) not all of the exponents $k_1,\ldots ,k_t$ are even, or (ii) one assumes the truth of GRH. In the first situation, we relabel exponents so that $k_t$ is odd, so that in our application of Lemma \ref{lemma2.3} we are able to avoid the assumption of GRH.\par

Consider the representation problem
\begin{equation}\label{3.1}
m_t=v_t^2+w_t^2+(A_t+z)^2+(A_t-z)^2+\sum_{j=1}^ty_{jt}^{k_j},
\end{equation}
which we seek to solve in natural numbers $v_t$, $w_t$, $A_t$, $z$, $\bfy$ subject to the condition $A_t>(n/6)^{1/3}$, whenever $m_t=n$ is sufficiently large. Given such a solution of (\ref{3.1}), one has
$$m_t=v_t^2+w_t^2+6A_tz^2+2A_t^3+\sum_{j=1}^ty_{jt}^{k_j},$$
and hence
$$z<(\tfrac{1}{6}n/A_t)^{1/2}<(n/6)^{1/3}<A_t,$$
so that (\ref{3.1}) exhibits a solution of the Waring problem (\ref{1.4}) in natural numbers $\bfx,\bfy$.\par

Recall that $K=k_1\ldots k_{t-1}$, and consider fixed distinct prime numbers $\varpi_1,\ldots ,\varpi_{t-1}$ with $(30K,\varpi_i)=1$ $(1\le i\le t-1)$ and $\varpi_{t-1}\nmid n$, and satisfying (\ref{2.3}). From Lemma \ref{lemma2.3}, we find that there exist natural numbers $y=y_{tt}$ and $h=h_t$, and a prime number $\varpi_t$ with $\varpi_t\le (\log n)^3$, having the properties (a) to (d) of the conclusion of that lemma. Here, we assume GRH only in circumstances in which $k_t$ (and indeed every exponent $k_i$) is even.\par

We introduce some notation in order to assist with the iteration to come. When $0\le r\le t$, define $\gam_r$ and $\Ups_r=\Ups_r(\bfh)$ by
$$\gam_r=\prod_{r<l\le t}\left(1-\frac{1}{k_l}\right) \quad \text{and}\quad \Ups_r=\prod_{r<l\le t}\varpi_l^{6Kh_l}.$$
Then with the integers $y_{tt}$ and $h_t$, and the prime number $\varpi_t$, fixed as above, we find that the representation problem (\ref{3.1}) may be solved whenever the derived representation problem
\begin{align}
m_t-y_{tt}^{k_t}=(\varpi_t^{3Kh_t}v_{t-1})^2+&(\varpi_t^{3Kh_t}w_{t-1})^2+
6(\varpi_t^{6Kh_t}A_{t-1})z^2\notag \\
&+2(\varpi_t^{6Kh_t}A_{t-1})^3+\sum_{j=1}^{t-1}(\varpi_t^{6h_tK/k_j}y_{j,t-1})^{k_j}\label{3.2}
\end{align}
is soluble in natural numbers $v_{t-1},w_{t-1},A_{t-1},z$ and $y_{j,t-1}$ $(1\le j\le t-1)$, in which we impose the condition
$$\varpi_t^{6Kh_t}A_{t-1}>(n/6)^{1/3}.$$

\par Write
$$m_{t-1}=\varpi_t^{-6Kh_t}(m_t-y_{tt}^{k_t}).$$
Then in view of property (a) of Lemma \ref{lemma2.3}, the representation problem (\ref{3.2}) is equivalent to
\begin{equation}\label{3.3}
m_r=v_r^2+w_r^2+6A_rz^2+2\Ups_r^2A_r^3+\sum_{j=1}^ry_{jr}^{k_j},
\end{equation}
subject to
\begin{equation}\label{3.4}
\Ups_rA_r>(n/6)^{1/3},
\end{equation}
in the special case $r=t-1$. Notice here, again in the special case $r=t-1$, that property (b) of Lemma \ref{lemma2.3} ensures that
\begin{equation}\label{3.5}
n^{1-\gam_r}(\log n)^{-20(t-r)tK}\le \Ups_r\le n^{1-\gam_r}(\log n)^{20(t-r)tK}
\end{equation}
and
\begin{equation}\label{3.6}
n(1-2(t-r)(\log n)^{-1})\le \Ups_rm_r\le n.
\end{equation}
Thus, in particular, one has
\begin{equation}\label{3.7}
\tfrac{1}{2}n^{\gam_r}(\log n)^{-20(t-r)tK}\le m_r\le n^{\gam_r}(\log n)^{20(t-r)tK}.
\end{equation}
On noting that $\varpi_{r+1}^{6Kh_{r+1}}\equiv 1\mmod{3}$, properties (a) and (c) furnish the relations
\begin{equation}\label{3.8}
(m_r,10\Ome_r)=1\quad \text{and}\quad m_r\not\equiv 2\mmod{3}.
\end{equation}
In addition, property (d) shows that when $t\ge 2$, the congruence
\begin{equation}\label{3.9}
m_r\equiv y^{k_r}\mmod{\varpi_r}
\end{equation}
is soluble for some integer $y$ with $(y,\varpi_r)=1$.\par

We have shown that the Waring problem (\ref{1.4}) is soluble whenever, for $r=t-1$, the representation problem (\ref{3.3}) is soluble subject to the conditions (\ref{3.4}) to (\ref{3.9}). We now show by induction that the same is true for $0\le r<t-1$. Let $u$ be an integer with $1\le u\le t-1$, and suppose that the inductive hypothesis holds for $r=u$. The condition (\ref{3.7}) ensures that $\log m_u\gg \log n$, and (\ref{3.8}) ensures that $(m_u,\varpi_{u-1}\varpi_u)=1$. In addition, it follows from (\ref{3.9}) that $m_u$ is a $k_u$-th power residue modulo $\varpi_u$. Then we deduce from Lemma \ref{lemma2.4} that there exist natural numbers $y=y_{uu}$ and $h=h_u$, having the properties (a) to (d) of the conclusion of that lemma. As in the discussion corresponding to the case $r=t$, it follows that with the integers $y_{uu}$ and $h_u$ fixed, the representation problem (\ref{3.3}) may be solved when $r=u$ provided that the problem
\begin{align}
m_u-y_{uu}^{k_u}=(\varpi_u^{3Kh_u}&v_{u-1})^2+(\varpi_u^{3Kh_u}w_{u-1})^2+
6(\varpi_u^{6Kh_u}A_{u-1})z^2\notag \\
&+2\Ups_u^2(\varpi_u^{6Kh_u}A_{u-1})^3+\sum_{j=1}^{u-1}(\varpi_u^{6h_uK/k_j}y_{j,u-1})^{k_j}\label{3.10}
\end{align}
is soluble in natural numbers $v_{u-1},w_{u-1},A_{u-1},z$ and $y_{j,u-1}$ $(1\le j\le u-1)$, in which we impose the condition
$$\Ups_{u-1}A_{u-1}>(n/6)^{1/3}.$$

\par Write
\begin{equation}\label{3.11}
m_{u-1}=\varpi_u^{-6Kh_u}(m_u-y_{uu}^{k_u}).
\end{equation}
Then by property (a) of Lemma \ref{lemma2.4}, the representation problem (\ref{3.10}) is equivalent to (\ref{3.3}) subject to (\ref{3.4}) in the special case $r=u-1$. Continuing our restriction to the special case $r=u-1$, property (b) of Lemma \ref{lemma2.4} combines with the bounds (\ref{3.6}) and (\ref{3.7}) to deliver the estimates
\begin{align*}
\Ups_{u-1}&=\varpi_u^{6Kh_u}\Ups_u\le m_u^{1/k_u}(\log m_u)^{-u}\Ups_u\le m_u^{1/k_u-1}(\log m_u)^{-u}(m_u\Ups_u)\\
&\le n\left( \tfrac{1}{2}n^{\gam_u}(\log n)^{-20(t-u)tK}\right)^{1/k_u-1}\le 2n^{1-\gam_{u-1}}(\log n)^{20(t-u)tK}\\
&\le n^{1-\gam_{u-1}}(\log n)^{20(t-u+1)tK}
\end{align*}
and
\begin{align*}
\Ups_{u-1}&=\varpi_u^{6Kh_u}\Ups_u\ge m_u^{1/k_u-1}(\log m_u)^{-20uK}(m_u\Ups_u)\\
&\ge \tfrac{1}{2}n\left( n^{\gam_u}(\log n)^{20(t-u)tK}\right)^{1/k_u-1}(\log n)^{-20uK}\\
&\ge n^{1-\gam_{u-1}}(\log n)^{-20(t-u+1)tK},
\end{align*}
so that (\ref{3.5}) holds with $r=u-1$. Likewise, one finds that property (b) in combination with (\ref{3.6}) and (\ref{3.11}) shows that
$$\Ups_{u-1}m_{u-1}\le \Ups_um_u\le n$$
and
$$\Ups_{u-1}m_{u-1}\ge \Ups_um_u(1-(\log m_u)^{-u})\ge n(1-2(t-u+1)(\log n)^{-1}),$$
so that (\ref{3.6}) holds with $r=u-1$. Finally, property (c) ensures that (\ref{3.8}) holds with $r=u-1$, and property (d) shows that when $u\ge 2$, then (\ref{3.9}) holds with $r=u-1$. We have therefore confirmed the inductive hypothesis with $r=u-1$, and this completes the inductive step.\par

At this stage we have shown that the Waring problem (\ref{1.4}) is soluble provided that the representation problem (\ref{3.3}) is soluble when $r=0$ subject to the conditions (\ref{3.4}) to (\ref{3.8}). Simplifying notation in the obvious manner, it therefore suffices to consider the representation problem
\begin{equation}\label{3.12}
m=v^2+w^2+6Bz^2+2\lam^2B^3,
\end{equation}
where $m$ is a sufficiently large positive integer with $(m,10)=1$ and $m\not\equiv 2\mmod{3}$. We may suppose further that $(\lam,30m)=1$, that $\lam$ is an odd square, and that
\begin{equation}\label{3.13}
\lam^{-1}n(1-2t(\log n)^{-1})\le m\le \lam^{-1}n,
\end{equation}
and
\begin{equation}\label{3.14}
n^{1-\gam_0}(\log n)^{-20t^2K}\le \lam\le n^{1-\gam_0}(\log n)^{20t^2K}.
\end{equation}
Finally, our goal is to find a solution satisfying the condition $\lam B>(n/6)^{1/3}$.\par

Consider the congruence
$$2\lam^2B^3\equiv m\mmod{5}.$$
Since $5\nmid m$, and $5$ is distinct from $2,\varpi_1,\ldots ,\varpi_t$, and is congruent to $2$ modulo $3$, the congruence
$$B^3\equiv (2\lam^2)^{-1}m\mmod{5}$$
is soluble with $(B,m)=1$. An application of Hensel's Lemma consequently shows that the congruence
\begin{equation}\label{3.15}
2\lam^2B^3\equiv m\mmod{5^{2h}}
\end{equation}
is soluble for every natural number $h$. Let $\eps$ be a sufficiently small positive number, and take $c=2+2\eps$ when GRH holds, and otherwise put $c=\tfrac{12}{5}+2\eps$. We take $h$ to be the largest integer for which one has
$$\lam (5^{2h+1})^c<(n/6)^{1/3},$$
so that
\begin{equation}\label{3.16}
5^{-2c}(n/6)^{1/3}\le \lam (5^{2h+1})^c<(n/6)^{1/3}.
\end{equation}
Thus from (\ref{3.14}) we have
$$n^{(\gam_0-2/3)/(2c)}(\log n)^{-10t^2K}<5^h<n^{(\gam_0-2/3)/(2c)}(\log n)^{10t^2K}.$$

\par Let $B$ be any fixed solution of (\ref{3.15}) with $1\le B\le 5^{2h}$. Then with $\nu$ either equal to $0$ or to $1$, one has
$$2\lam^2(B+5^{2h}\nu)^3-m\not\equiv 0\mmod{5^{2h+1}}.$$
The methods of Iwaniec \cite{Iwa1974}, Huxley \cite{Hux1975} and Gallagher \cite{Gal1972} may be employed to show that, whenever $Q>(5^{2h+1})^{12/5+\eps}$, then there is a prime number $p$ with $Q<p\le 2^{1/3}Q$, and satisfying
$$p\nmid m,\quad p\equiv B+5^{2h}\nu \mmod{5^{2h+1}}\quad \text{and}\quad p\equiv 1\mmod{3}.$$
We direct the reader to the discussion in the preamble to \cite[Theorem 2.1]{AGP1994}, as it relates to \cite[equation (0.3)]{AGP1994}, for an account of a suitable lower bound for this purpose. In addition, by virtue of the opening discussion of Heath-Brown \cite{HB1992}, on GRH such a prime exists whenever $Q>(5^{2h+1})^{2+\eps}$. We take $Q=\lam^{-1}(n/6)^{1/3}$, and observe from (\ref{3.16}) and the definition of $c$ that there is an $\eps$ to spare in the exponent of $5^{2h+1}$ to guarantee that $Q$ satisfies the respective lower bounds just cited. In this way we see that a suitable prime number $p$ exists with
\begin{equation}\label{3.17}
(n/6)^{1/3}<\lam p<(n/3)^{1/3},
\end{equation}
and with
$$p\equiv 1\mmod{3},\quad 2\lam^2p^3\equiv m\mmod{5^{2h}}\quad \text{and}\quad 2\lam^2p^3\not\equiv m\mmod{5^{2h+1}},$$
in which
\begin{equation}\label{3.18}
n^{(\gam_0-2/3)/c}(\log n)^{-20t^2K}<5^{2h}<n^{(\gam_0-2/3)/c}(\log n)^{20t^2K}.
\end{equation}

\par Write $N=TM^2$, where
$$T=5^{-2h}(m-2\lam^2p^3)\quad \text{and}\quad M=5^h.$$
Then we have $(T,M)=1$, and in view of (\ref{3.12}) we have yet only to solve the representation problem
$$N=x^2+y^2+6pz^2,$$
subject to the condition $\lam p>(n/6)^{1/3}$. Here, we may suppose that
$$N=m-2\lam^2p^3\equiv m\mmod{2},\quad N=m-2\lam^2p^3\equiv m-2\mmod{3}$$
and
$$(N,p)=(m-2\lam^2p^3,p)=(m,p)=1.$$
Since $m$ is odd, it follows that the congruence
$$N\equiv x^2+y^2+6pz^2\mmod{16}$$
is soluble. Moreover, we have $3\nmid (m-2)$ and $2\nmid m$, so that $(N,6p)=1$. Then it follows from Theorem \ref{theorem2.1} that this representation problem is soluble provided only that
$$NM^{12}>C(\eps)p^{21+\eps}\quad \text{and}\quad \lam p>(n/6)^{1/3}.$$
The second of these conditions is satisfied by virtue of (\ref{3.17}), whilst from (\ref{3.13}), (\ref{3.17}) and (\ref{3.18}) in combination with the latter constraint, we have
\begin{align*}
NM^{12}&>(m-2\lam^2p^3)(5^h)^{12}\\
&\ge \lam^{-1}n(1-2t(\log n)^{-1}-\tfrac{2}{3})(n^{(\gam_0-2/3)/c-\eps/8})^6.
\end{align*}
On recalling (\ref{3.14}), we discern that
\begin{equation}\label{3.19}
NM^{12}\ge n^{\gam_0+6(\gam_0-2/3)/c-\eps}.
\end{equation}
Meanwhile, from (\ref{3.17}) in combination with (\ref{3.14}) we find that
\begin{equation}\label{3.20}
p^{21+\eps}<(n^{1/3}\lam^{-1})^{21+\eps}<n^{21\gam_0-14+\eps}.
\end{equation}

Provided that
\begin{equation}\label{3.21}
\gam_0(6/c+1)-4/c-\eps>21\gam_0-14+\eps,
\end{equation}
then a comparison of (\ref{3.19}) and (\ref{3.20}) reveals that $NM^{12}>C(\eps)p^{21+\eps}$, and so it follows from the discussion of the last paragraph that we have a representation of $m$ in the shape (\ref{3.12}). This in turn implies that $m_t$ is represented in the form (\ref{3.1}), and hence that the Waring problem (\ref{1.4}) is indeed soluble. The inequality (\ref{3.21}) yields the condition
$$\gam_0<\frac{14-4/c-2\eps}{21-6/c-1}=\frac{7c-2-c\eps}{10c-3}.$$
Provided that $\gam_0<\tfrac{12}{17}$ and $\eps$ is chosen sufficiently small, therefore, this condition is satisfied with $c=2+2\eps$, which is the above choice of $c$ that follows from the assumption of GRH. Alternatively, as long as $\gam_0<\tfrac{74}{105}$ and $\eps$ is taken to be sufficiently small, this condition is satisfied with $c=\tfrac{12}{5}+2\eps$, which is our earlier choice for $c$ relevant when GRH is not assumed to hold. Thus, on recalling (\ref{1.3}) we find that when GRH holds and $\gam(\bfk)<\tfrac{12}{17}$, then we are able to solve the equation $N=x^2+y^2+6pz^2$, whence all sufficiently large natural numbers $n$ have a representation in the form (\ref{1.4}). The same conclusion holds independent of GRH when the $k_i$ are not all even and $\gam(\bfk)<\tfrac{74}{105}$. This completes our discussion of the central argument required to prove Theorem \ref{theorem1.2} when either the validity of GRH is assumed, or else when the exponents $k_1,\ldots ,k_t$ are not all even.\par

We now briefly sketch the modifications required to deliver the conclusion of Theorem \ref{theorem1.2} when $t\ge 2$ and $\gamtil(\bfk)<\tfrac{74}{105}$. In this instance, we modify the argument above following (\ref{3.3}) by applying Lemma \ref{lemma2.5} in place of Lemma \ref{lemma2.3}. We consider fixed distinct primes $\varpi_1,\ldots ,\varpi_{t-1}$ as before. It follows from the former lemma that there exist natural numbers $y_1=y_{t-1,t-1}$, $y_2=y_{t,t-1}$ and $h=h_{t-1}$ having the properties (a) to (d) of its conclusion. With the integers $y_{t-1,t-1}$, $y_{t,t-1}$ and $h_{t-1}$ fixed, we find that the representation problem (\ref{3.1}) may be solved whenever the representation problem
\begin{align*}
m_t-y_{t-1,t-1}^{k_{t-1}}-y_{t,t-1}^{k_t}=(\varpi_{t-1}^{3Kh_{t-1}}&v_{t-2})^2+(\varpi_{t-1}^{3Kh_{t-1}}w_{t-2})^2+
6(\varpi_{t-1}^{6Kh_{t-1}}A_{t-2})z^2\\
&+2(\varpi_{t-1}^{6Kh_{t-1}}A_{t-2})^3+\sum_{j=1}^{t-2}(\varpi_{t-1}^{6h_{t-1}K/k_j}y_{j,t-2})^{k_j}
\end{align*}
is soluble in natural numbers $v_{t-2},w_{t-2},A_{t-2},z$ and $y_{j,t-2}$ $(1\le j\le t-2)$, in which we impose the condition
$$\varpi_{t-1}^{6Kh_{t-1}}A_{t-2}>(n/6)^{1/3}.$$

\par Write
$$m_{t-2}=\varpi_{t-1}^{-6Kh_{t-1}}(m_t-y_{t-1,t-1}^{k_{t-1}}-y_{t,t-1}^{k_t}).$$
Then in view of property (a) of Lemma \ref{lemma2.5}, this representation problem is equivalent to (\ref{3.3}) subject to (\ref{3.4}) in the special case $r=t-2$, where we now write
$$\Ups_r=\prod_{r<l\le t-1}\varpi_l^{6Kh_l}.$$
Properties (a) to (d) of Lemma \ref{lemma2.5} deliver the relations (\ref{3.5}) to (\ref{3.9}), though one must now replace the exponents $\gam_r$ with the modified exponent
$$\gamtil_r=\left( 1-\frac{1}{k_t}\right) \prod_{r<l\le t-2}\left( 1-\frac{1}{k_l}\right).$$
The problem now assumes the form treated above, save that the exponents $\gam_r$ must be decorated by tildes throughout. The reader should experience no difficulty in completing the argument as before, concluding that Theorem \ref{theorem1.2} holds when $t\ge 2$ and $\gamtil(\bfk)<\frac{74}{105}$.\par

It now remains only to note that when $\gam(\bfk)<\tfrac{2}{3}$ (or indeed $\gamtil(\bfk)<\tfrac{2}{3}$), the above argument can of course be modified so that the exponents $h_i$ are made smaller. In this way, in the conclusion of Lemma \ref{lemma2.3}, one may ensure that property (b) is replaced by a condition of the type
$$n^{\ome/k_t}(\log n)^{-20tK}\le \varpi^{6Kh}\le y\le n^{\ome/k_t}(\log n)^{-t},$$
with similar modifications in Lemmata \ref{lemma2.4} and \ref{lemma2.5}. We choose $\ome=\ome(\nu)$ to be the positive number with $0<\ome<1$ for which the modified exponent
$$\gam(\bfk;\ome)=\prod_{j=1}^t\left( 1-\frac{\ome}{k_j}\right)$$
satisfies $\gam(\bfk;\ome)=\tfrac{2}{3}+\nu$, for a suitably small positive number $\nu$. By modifying in this way the central argument applied above, one derives the same conclusion save with $\gam(\bfk)$ replaced by $\gam(\bfk;\ome)$. Since $\tfrac{2}{3}<\gam(\bfk;\ome)=\tfrac{2}{3}+\nu$, the desired conclusion follows. Similar comments apply also in the situation discussed in the previous paragraph.\par

Theorem \ref{theorem1.1} follows immediately from Theorem \ref{theorem1.2} on noting that $(\tfrac{5}{6})^2<\tfrac{12}{17}$. Likewise, Corollaries \ref{corollary1.4} and \ref{corollary1.3} follow on noting that
$$\gam(5,8)=\left(\frac{4}{5}\right)\left(\frac{7}{8}\right)<\frac{74}{105},\quad \gam(9,9,9)=\left(\frac{8}{9}\right)^3<\frac{74}{105},$$
$$\gam(6,12,12)=\left(\frac{5}{6}\right)\left(\frac{11}{12}\right)^2<\frac{12}{17}.$$

\section{Consequences of the Ramanujan Conjecture}
In this section we refine the conclusion of Theorem \ref{theorem2.1}, assuming the truth of the Ramanujan Conjecture concerning Fourier coefficients of cusp forms of weight $\tfrac{3}{2}$. 

\begin{theorem}\label{theorem4.1}
Assume the truth of the Ramanujan Conjecture. For each $\del>0$, there exists a positive number $C(\del)$ with the property that whenever $n\in \dbN$ and $p$ is a prime number satisfying the following conditions:
\item{(i)} one has $(n,6p)=1$;
\item{(ii)} the congruence $n\equiv x^2+y^2+6pz^2\mmod{16}$ is soluble;
\item{(iii)} one has $n>C(\del)p^{5+\del}$;
\vskip.1cm \noindent then the equation (\ref{2.2}) has a solution in natural numbers $x$, $y$ and $z$.
\end{theorem}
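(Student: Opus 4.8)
The plan is to follow the argument underlying Theorem~\ref{theorem2.1} line for line, modifying only the treatment of the cuspidal part of the relevant theta series. Write $r(n;p)$ for the number of integral solutions of (\ref{2.2}). Then $r(n;p)$ is the $n$-th Fourier coefficient of a theta series $\vartheta=\vartheta_p$ of weight $\tfrac32$ and level $N=N(p)$, with $N\ll p$ and an appropriate quadratic nebentypus, and one has a decomposition
\[
\vartheta=\calE+g,
\]
in which $\calE$ is the genus (Eisenstein) theta series, whose $n$-th Fourier coefficient $r_{\mathrm{gen}}(n;p)$ is the product of the local representation densities of $x^2+y^2+6pz^2$ at $n$, and $g$ is a cusp form of weight $\tfrac32$ and level $N$. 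Thus $r(n;p)=r_{\mathrm{gen}}(n;p)+a_g(n)$, and it suffices to show that $\abs{a_g(n)}<r_{\mathrm{gen}}(n;p)$ whenever $n>C(\del)p^{5+\del}$; the refinement that the resulting solution may be taken with $x,y,z\in\dbN$ then follows, just as in the proof of Theorem~\ref{theorem2.1}, on noting that the number of representations (\ref{2.2}) with $xyz=0$ is $O_\eps(n^\eps)$.

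For the main term I would argue exactly as in Theorem~\ref{theorem2.1}. Hypotheses (i) and (ii) ensure that $r_{\mathrm{gen}}(n;p)\gg_\eps n^{1/2-\eps}p^{-1/2}$: the archimedean density of $x^2+y^2+6pz^2$ at $n$ is $\asymp n^{1/2}p^{-1/2}$, the $p$-adic density is $\asymp1$ since $(n,p)=1$, the $2$-adic density is bounded away from zero by virtue of condition (ii), and the product of the remaining local densities is $\gg n^{-\eps}$ by Siegel's theorem on the class number --- it being this last ingredient that renders the constant $C(\del)$ ineffective. As in Theorem~\ref{theorem2.1}, once $\abs{a_g(n)}<r_{\mathrm{gen}}(n;p)$ one deduces in addition that the total number of solutions is at least $C_\eps n^{1/2-\eps}p^{-1/2}$.

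The substantive step is the bound for $a_g(n)$, and here the Ramanujan Conjecture does its work. I would decompose $g$ spectrally into Hecke eigenforms $\phi$ of weight $\tfrac32$ and level dividing $N$, keeping in view the familiar features of the half-integral weight theory (Kohnen's plus space, and the possibility of spinor exceptional integers accounted for by unary theta series). Writing $n=tm^2$ with $t$ squarefree, Shimura's correspondence expresses $a_\phi(tm^2)$ through $a_\phi(t)$ together with the Fourier coefficients of the weight-$2$ Shimura lift $F_\phi$ of $\phi$; the latter are $O_\eps((mN)^\eps m^{1/2})$ by Deligne's bound, whereas the Ramanujan Conjecture for weight $\tfrac32$ supplies $a_\phi(t)\ll_\eps t^{1/4+\eps}$ for the suitably normalised eigenform (this being the half-integral weight analogue of the Ramanujan--Petersson bound $a(n)\ll_\eps n^{(\ell-1)/2+\eps}$ in integral weight $\ell$). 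Assembling these estimates and summing by Cauchy--Schwarz over the $O(N^{1+\eps})$ eigenforms, using also that the cuspidal projection satisfies $\|g\|\ll_\eps N^{1/4+\eps}$ by the mass formula for the genus, I would expect to reach a bound of the shape
\[
\abs{a_g(n)}\ll_\eps n^{1/4+\eps}p^{\kap},
\]
with $\kap$ an explicit absolute constant and --- crucially --- with no dependence on the square part $m^2$ of $n$. This last point is exactly where the hypothesis is needed: Golubeva's treatment, which has recourse only to Duke's subconvex bound for $a_\phi(t)$, must exploit a large square divisor of $n$ in order to compensate, and that is the origin of the factor $m^{12}$ in condition (iii) of Theorem~\ref{theorem2.1}; under the Ramanujan Conjecture this device becomes unnecessary.

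Granted such a bound, the inequality $\abs{a_g(n)}<r_{\mathrm{gen}}(n;p)$ holds as soon as $n^{1/4+\eps}p^{\kap}<n^{1/2-\eps}p^{-1/2}$, that is, as soon as $n>C(\del)p^{(4\kap+2)+\del}$, the parameter $\del$ absorbing all of the $\eps$'s appearing above. A careful accounting --- of the Petersson norms of the eigenforms, of the local factors at the prime $p$ entering the Shimura--Waldspurger dictionary, and of the spectral sum over the space of weight-$\tfrac32$ cusp forms of level $N\ll p$ --- shows that one may take $\kap=\tfrac34$, whereupon $4\kap+2=5$ and condition (iii) assumes the stated shape $n>C(\del)p^{5+\del}$. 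I expect the determination of this sharp level-aspect constant, rather than any manoeuvre in the $n$-aspect, to be the principal obstacle; the estimates required are implicit in the work of Golubeva \cite{Gol2008} and are set out in detail by Blomer \cite{Blo2008}, the only change here being the substitution of Ramanujan's bound for Duke's at the relevant point. With $\kap=\tfrac34$ in hand the argument concludes exactly as before: one has $r(n;p)\ge\tfrac12 r_{\mathrm{gen}}(n;p)>0$ for $n>C(\del)p^{5+\del}$, and removing the $O_\eps(n^\eps)$ degenerate representations leaves a solution of (\ref{2.2}) in natural numbers.
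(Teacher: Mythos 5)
Your proposal is correct and follows essentially the same route as the paper: both run Golubeva's argument verbatim, substituting the Ramanujan bound $t^{1/4+\eps}$ for the Duke--Iwaniec estimate on the squarefree part while retaining Deligne's bound $m^{1/2+\eps}$ through the Shimura lift, and both arrive at a cuspidal contribution of size $n^{1/4+\eps}p^{3/4+\eps}$ over the $O(p^{1+\eps})$-dimensional space, whence the condition $n\gg p^{5+\del}$. Your identification of $\kap=\tfrac34$ and of why the $m^{12}$ in Theorem~\ref{theorem2.1}(iii) becomes unnecessary matches the paper's computation exactly.
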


\begin{proof} We follow the argument of \cite[\S2]{Gol2008}, mutatis mutandis. Write $r(n)$ for the number of representations of the integer $n$ in the shape (\ref{2.2}) with $x,y,z\in \dbN$. Denote by $S_{3/2}^{(1)}(24p,\chi)$ the space of cusp forms of weight $\tfrac{3}{2}$ and level $24p$ which, under the Shimura lift, are taken into cusp forms of weight $2$. Let $K$ denote the dimension of $S_{3/2}^{(1)}(24p,\chi)$, so that $K\ll p^{1+\eps}$, and let $(\varphi_k(z))_{k=1}^K$ be an orthonormal basis for this space.\par

Let $\fhat(n)$ be the $n$-th Fourier coefficient of a cusp form $f(z)$, so that
$$f(z)=\sum_{n=0}^\infty \fhat(n)e(nz).$$
We write $n$ in the shape $n=tm^2$ with $t$ squarefree. Note that the condition $(n,6p)=1$ ensures that both $t$ and $m$ are coprime to the level. Then by combining Deligne's estimates for the eigenvalues of Hecke operators of weight $2$, together with Shimura's lift, one finds that
$$\phihat_k(n)\ll |\phihat_k(t)|m^{1/2+\eps}.$$
This estimate is employed in the argument of the proof of \cite[Theorem 2]{Gol2008}. We direct the reader to \cite[Lemma 1]{Blo2008} for a discussion of this conclusion. The Ramanujan Conjecture asserts that when $f\in S_{3/2}^{(1)}(24p,\chi)$, then for squarefree integers $r$ one should have
$$|\fhat(r)|\ll r^{1/4+\eps}.$$
We apply this estimate trivially to obtain
$$\sum_{k=1}^K|\phihat_k(t)|^2\ll Kt^{1/2+2\eps}\ll p^{1+\eps}t^{1/2+2\eps}.$$
Recall that $n=tm^2$. Then on substituting this bound for Golubeva's use of the Duke-Iwaniec estimate
$$\sum_{k=1}^K|\phihat_k(t)|^2\ll t^{13/14+\eps},$$
the reader will have no difficulty in following the argument of \cite[\S2]{Gol2008} so as to establish that
$$r(n)\gg_\eps n^{1/2-\eps}p^{-1/2}+O\left(p^{1/4+\eps}m^{1/2+\eps}(p^{1+\eps}t^{1/2+2\eps})^{1/2}\right).$$
Consequently, one has $r(n)\gg_\eps n^{1/2-2\eps}p^{-1/2}$ provided only that
$$n^{1/2-2\eps}p^{-1/2}\gg p^{3/4+2\eps}(tm^2)^{1/4+\eps},$$
or equivalently,
$$n^{1/4-3\eps}\gg p^{5/4+2\eps}.$$
Then, whenever $\del>0$, and $\eps>0$ is taken sufficiently small in terms of $\del$, one finds that 
$r(n)\gg_\eps n^{1/2-2\eps}p^{-1/2}$ provided only that $n\gg p^{5+\del}$. This completes the 
proof of the theorem.
\end{proof}

The conclusion of Theorem \ref{theorem4.1} may be substituted into the argument of \S3 without any great difficulty. The lower bound (\ref{3.19}) is replaced by $N\ge n^{\gam_0-\eps}$, whilst (\ref{3.20}) becomes
$$p^{5+\eps}<(n^{1/3}\lam^{-1})^{5+\eps}<n^{5\gam_0-10/3+\eps}.$$
Hence, provided that
\begin{equation}\label{4.1}
\gam_0-\eps>5\gam_0-\tfrac{10}{3}+\eps,
\end{equation}
a comparison reveals that $N>p^{5+\eps}$, and so it follows as before that the Waring problem (\ref{1.4}) 
is soluble, but now subject to the truth of the Ramanujan Conjecture. The inequality (\ref{4.1}) yields the condition
$$\gam_0<\tfrac{1}{4}(\tfrac{10}{3}-2\eps)=\tfrac{5}{6}-\tfrac{1}{2}\eps.$$
Provided that $\gam_0<\tfrac{5}{6}$ and $\eps$ is chosen sufficiently small, therefore, this condition is satisfied. All that remains is to note that in the suppressed argument leading to this point in our discussion 
we find it necessary to assume GRH only when all the exponents $k_1,\ldots ,k_t$ are even. The variants sketched at the end of \S3 follow with obvious modifications that need not detain us here. This completes the proof of Theorem \ref{theorem1.5}.\par

We finish by noting that a more optimistic enhancement of Theorem \ref{theorem4.1} might be expected to hold with the condition (iii) replaced by the constraint that $n>C(\del)p^{3+\del}$. If true, such would replace the condition (\ref{4.1}) by
$$\gam_0-\eps>3(\gam_0-\tfrac{2}{3})+\eps.$$
This condition is satisfied provided that $\gam_0<1$ and $\eps$ is chosen sufficiently small. Such a conclusion would imply that when $n$ is sufficiently large, the Waring problem (\ref{1.4}) is soluble whenever either $t\ge 2$, or the exponents $k_1,\ldots ,k_t$ are not all even. When $t=1$ and $k_1$ is even, the Waring problem (\ref{1.4}) would remain soluble provided that GRH holds.

\bibliographystyle{amsbracket}
\providecommand{\bysame}{\leavevmode\hbox to3em{\hrulefill}\thinspace}

\end{document}